\newtheorem{thm}{Theorem}
\newtheorem{prop}{Proposition}
\newtheorem{lem}{Lemma}
\newtheorem{defi}{Definition}
\newtheorem{rem}{Remark}
\newtheorem{ex}{Example}
\journal{}
\begin{document}

\begin{frontmatter}
\title{Dynamics of complex-valued fractional-order neural networks}

\author[IeAT,UVT]{Eva Kaslik\corref{cor}}
\ead{ekaslik@gmail.com}
\author[IeAT,UPT]{Ileana Rodica R\u{a}dulescu}
\ead{nicola\_rodica@yahoo.com}

\address[IeAT]{Institute e-Austria Timisoara, Bd. V. Parvan nr. 4, room 045B, 300223, Timisoara, Romania}

\address[UVT]{Department of Mathematics and Computer Science, West University of Timisoara, Romania}

\address[UPT]{Faculty of Applied Sciences, University Politehnica of Bucharest, Romania}

\cortext[cor]{Corresponding author}

\begin{abstract}
The dynamics of complex-valued fractional-order neuronal networks are investigated, focusing on stability, instability and Hopf bifurcations. Sufficient conditions for the asymptotic stability and instability of a steady state of the network are derived, based on the complex system parameters and the fractional order of the system, considering simplified neuronal connectivity structures (hub and ring). In some specific cases, it is possible to identify the critical values of the fractional order for which Hopf bifurcations may occur. Numerical simulations are presented to illustrate the theoretical findings and to investigate the stability of the limit cycles which appear due to Hopf bifurcations.
\end{abstract}

\begin{keyword}
neural networks \sep fractional order \sep fractance \sep stability \sep multistability \sep instability \sep Hopf bifurcation \sep ring \sep  hub

%\MSC 34D20 \sep 45M10 \sep 92B20
\end{keyword}

\end{frontmatter}

\section{Introduction}

In the last few decades, generalizations of dynamical systems using
fractional derivatives instead of classical integer-order derivatives have
proved to be more accurate in the mathematical modeling of real world
phenomena arising from several interdisciplinary areas. It is now
well-understood that fractional derivatives provide a good tool for the
description of memory and hereditary properties of various processes,
fractional-order systems being characterized by infinite memory.
Phenomenological description of viscoelastic liquids \citep{Heymans_Bauwens}%
, colored noise \citep{Cottone}, diffusion and wave propagation %
\citep{Henry_Wearne,Metzler}, boundary layer effects in ducts %
\citep{Sugimoto}, fractional kinetics \citep{Mainardi_1996}, electromagnetic
waves \citep{Engheia}, electrode-electrolyte polarization %
\citep{Ichise_Nagayanagi_Kojima}, represent just a few of the many
application areas of fractional derivatives.

It is important to emphasize that many qualitative properties of
integer-order dynamical systems cannot be extended by simple generalizations
to fractional-order dynamical systems, and hence, the analysis of
fractional-order dynamical systems is a very important field of research.
For example, it has been shown \citep{Eva-Siva-non-periodic} that the
fractional-order derivative (of Caputo, Grunwald-Letnikov or
Riemann-Liouville type) of a non-constant periodic function cannot be a
periodic function of the same period, while the integer-order derivative of
a periodic function is indeed a periodic function of the same period. As a
consequence, exact periodic solutions do not exist in a wide class of
fractional-order dynamical systems. In fact, many fractional-order analogues
of important theoretical results from the classical integer-order dynamical
systems theory are still open questions (such as the Hopf bifurcation
theorem for fractional-order systems or the stability analysis of
time-delayed fractional-order systems \citep{Kaslik_2014_ICNPAA}).
Therefore, the theoretical analysis of fractional-order models arising from
different real world problems has to be done with care.

In fractional-order neural network models, the common capacitor from the
continuous-time integer-order recurrent neural networks is replaced by a
generalized capacitor, called fractance \citep{Nakagawa,Fractance}. In many
engineering applications, there is a need for lossy capacitors with
prescribed losses, in order to accomplish analog fractional calculus
operations within a single device. A review of circuit theory approaches
aimed at creating fractional-order capacitors (fractance devices) has been
presented by \citet{Elwakil}.

The fractional-order formulation of artificial neural network models is also
justified by research results concerning biological neurons. For example, %
\citet{Lundstrom} concluded that fractional differentiation provides neurons
with a fundamental and general computation ability that can contribute to
efficient information processing, stimulus anticipation and
frequency-independent phase shifts of oscillatory neuronal firing. The
results reported by \citet{Anastasio} suggest that the resulting net output
of motor and premotor neurons can be described as fractional differentiation
relative to eye position.

Fractional-order real-valued artificial neural networks have been in the
spotlight since the year 2000, starting with the pioneering works of %
\citet{Arena,Matsuzaki,Petras,Boroomand}, which mainly report on results of
numerical simulations, especially on the numerical evidence of limit cycles
and chaotic phenomena. Several early papers also discuss chaotic
synchronization in fractional-order neural networks %
\citep{Zhu_2008,Zhou_2008_1,Zhou_2009}. The first papers devoted to the
theoretical stability analysis and Hopf bifurcations of fractional-order
neural networks of Hopfield type \citep{ES-IJCNN-2011,Eva-Siva-4} also
describe potential routes towards the onset of chaotic behavior when the
fractional order of the system increases. The numerical examples presented
in these papers unveiled highly complex dynamical behavior in real-valued
fractional-order neural networks, such as the co-existence of strange
attractors with several asymptotically stable steady states and limit
cycles. Moreover, in the last five years, a large number of papers have been
published in this field, focusing on theoretical topics such as global
Mittag-Leffler stability and synchronization \citep{Chen_2014}, undamped
oscillations generated by Hopf bifurcations \citep{Xiao_2015}, dynamics of
delayed fractional-order neural networks \citep{Chen_2013,Wang_2014}, etc.

{However, since many interesting applications of neural networks involve
complex signals, such as pattern recognition and classification, intelligent
image processing, nonlinear filtering, brain-computer interfaces, time
series prediction, robotics and bioinformatics, the investigation of
complex-valued neural networks (CVNNs) is an essential extension of the
analysis of real-valued neural networks. Complex-valued neural networks (CVNNs) are networks that use
complex-valued variables and parameters, successfully dealing in this manner with
complex-valued information.

CVNNs are exceptionally rich in diversity and
because they are very compatible with wave phenomena, they are appropriate
for the processes associated with complex altitude (e.g. interferometric
radar systems). Usually, propagation and interference of electromagnetic
waves are expressed by the magnitude of transmission and reflection, phase
progression and retardation, superposition of fields and so on, phenomena
which might be naturally expressed by the use of complex numbers \citep{Hirose_2006}.
Correspondingly, these phenomena are correlated with basic processes in the
CVNNs, for instance weighting at synaptic connections, i.e. multiplications
in amplitude and shifts in phase, and summation of the weighted inputs.
CVNNs provide systems with appropriate information representations in many
other fields, most of them being related to wave phenomena, for example:
measurements using waves such as radar image processing, active antennas in
electromagnetism, analysis and synthesis in voice processing, learning
electronic-wave devices etc.

Several important research directions have arisen concerning CVNNs, such as: the formal generalization
of commonly used algorithms to the complex-valued case, the use of original
complex-valued activation functions that can increase significantly the
neuron and network functionality and the development of quaternion neurons
and neural networks \citep{Hirose_2009}. }

To the best of our knowledge, there are only a few research papers dedicated
to the investigation of fractional-order complex-valued neural networks,
published very recently. Sufficient conditions for finite-time stability %
\citep{Rakkiyappan_2014_finite}, uniform asymptotic stability %
\citep{Rakkiyappan_2015_fcvnn}, $O(t^{-\alpha})$-stability and global
asymptotical periodicity \citep{Rakkiyappan_2016} have been obtain for
fractional-order complex-valued neural networks with time-delays. In %
\citet{Rakkiyappan_2015_stability}, linearization techniques have been used
to obtain sufficient conditions for the asymptotic stability of the
equilibrium states of fractional-order complex-valued neural networks with
time-delays. Necessary conditions for the synchronization of
fractional-order complex-valued neural networks with time delays has been
obtained by \citet{Bao_2016}. Results concerning bifurcation phenomena in delayed fractional complex-valued neural networks have been recently reported by \citet{Huang_2017}.

It is important to emphasize that, to the best of our knowledge, at this
moment, there are no known results concerning Hopf bifurcations in
fractional-order complex-valued neural networks, and therefore, this
constitutes one of the aims of this paper. Moreover, this paper is devoted
to the theoretical stability analysis of fractional-order complex-valued
neural networks of Hopfield type, extending the results presented by %
\citet{ES-IJCNN-2011,Eva-Siva-4} for fractional-order real-valued neural
networks. Two special connectivity types will be discussed in detail:
networks with hub and ring structures, respectively. These simplified
connectivity structures are studied to gain insight into the mechanisms
underlying the behavior of recurrent networks with more complicated
connectivity. Studying patterns of interconnections, called "network motifs" %
\citep{Milo}, occurring in neural networks is fundamental to understanding
the dynamic behavior of the whole network.

The paper is structured as follows: in section 2, a few preliminaries are
included about fractional-order differential systems, while in section 3, a
basic results are described regarding complex-valued fractional-order neural
networks of Hopfield type. In section 4, a detailed stability and Hopf
bifurcation analysis is undertaken for fractional-order Hopfield neural
networks, concentrating on the two previously mentioned connectivity
structures (hub and ring). In each case, numerical examples are also
presented. Concluding remarks are included in section 5.

\section{Preliminaries on fractional-order differential systems}

The fractional derivate employed in this paper is the Caputo derivative,
which is widely considered more applicable to real world problems, as it
only requires initial conditions given in terms of integer-order
derivatives, representing well-understood features of physical situations %
\citep{Podlubny}.

\begin{defi}
For a continuous function $f$, with $f'\in L^1_{loc}(\mathbb{R}^+)$, the
Caputo fractional-order derivative of order $q\in(0,1)$ of $f$ is defined by
\begin{equation*}
^cD^q f(t)=\frac{1}{\Gamma(1-q)}\int_ 0^t(t-s)^{-q}f^{\prime }(s)ds~,
\end{equation*}
where the gamma function is defined, as usual, as:
\begin{equation*}
\Gamma(z)=\int_0^\infty e^{-t}t^{z-1}dt~.
\end{equation*}
\end{defi}

Remarkable scientific books which provide the main theoretical tools for the
qualitative analysis of fractional-order dynamical systems, and at the same
time, show the interconnection as well as the contrast between classical
differential equations and fractional differential equations, are the works
of \citet{Podlubny,Kilbas,Lak}.

The following stability result holds for linear autonomous fractional-order
systems (see \citet{Matignon,Sabatier_2012}):

\begin{thm}
\label{thm.linear.stab} The linear autonomous system
\begin{equation}\label{eq.lin}
^cD^q \mathbf{x}=A \mathbf{x}
\end{equation}
where $A\in\mathbb{R}^{n\times n}$ and $q\in(0,1)$ is asymptotically stable
if and only if
\begin{equation}  \label{eq.lambda.spec}
|\arg(\lambda)|>\frac{q\pi}{2}\qquad\forall \lambda\in\sigma(A)
\end{equation}
or equivalently, if and only if
\begin{equation}  \label{ineq.lambda}
|\Im(\lambda)|>\Re(\lambda)\tan\frac{q\pi}{2}\qquad\forall
\lambda\in\sigma(A)
\end{equation}
where $\sigma(A)$ denotes the spectrum of the matrix $A$ (i.e. the set of
all eigenvalues), $\Re(\lambda)$ and $\Im(\lambda)$ denote respectively, the
real and imaginary part of $\lambda$.
\end{thm}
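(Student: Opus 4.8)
The plan is to solve the linear system explicitly and read off the asymptotic behaviour of its solutions from the eigenvalues of $A$. First I would apply the Laplace transform to \eqref{eq.lin}: using the transform rule for the Caputo derivative, $\mathcal{L}\{{}^cD^q\mathbf{x}\}(s)=s^q\tilde{\mathbf{x}}(s)-s^{q-1}\mathbf{x}(0)$, the transformed system becomes $(s^q I-A)\tilde{\mathbf{x}}(s)=s^{q-1}\mathbf{x}(0)$, so that $\tilde{\mathbf{x}}(s)=s^{q-1}(s^q I-A)^{-1}\mathbf{x}(0)$. Equivalently, reducing $A$ to its Jordan canonical form and inverting the transform, the solution is $\mathbf{x}(t)=E_q(At^q)\mathbf{x}(0)$, where $E_q$ denotes the matrix Mittag-Leffler function; for a diagonalizable $A$ the components decouple into scalar terms of the form $E_q(\lambda t^q)$ with $\lambda\in\sigma(A)$, while a Jordan block of size $m$ with eigenvalue $\lambda$ contributes in addition the two-parameter Mittag-Leffler terms $t^{kq}E_{q,kq+1}(\lambda t^q)$ for $0\le k<m$.

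The decisive ingredient is the large-argument asymptotics of the Mittag-Leffler function, which exhibits a sharp dichotomy: for $z=\lambda t^q$ with $t\to\infty$ one has, on the one hand, an exponentially dominated regime $E_q(z)\sim\tfrac1q z^{(1-q)/q}\exp(z^{1/q})$ when $|\arg z|<\tfrac{q\pi}{2}$, and on the other hand a purely algebraic regime $E_q(z)=-\sum_{k=1}^{N}z^{-k}/\Gamma(1-qk)+O(|z|^{-N-1})$ when $|\arg z|>\tfrac{q\pi}{2}$. Since $t^q>0$ we have $\arg z=\arg\lambda$, so the exponential term is present exactly when $|\arg\lambda|<\tfrac{q\pi}{2}$; in that case its modulus behaves like $\exp(\Re(\lambda^{1/q})\,t)$ with $\Re(\lambda^{1/q})=|\lambda|^{1/q}\cos(\arg\lambda/q)>0$, forcing divergence, whereas when $|\arg\lambda|>\tfrac{q\pi}{2}$ every mode is algebraically damped and tends to zero. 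I would treat the critical case $|\arg\lambda|=\tfrac{q\pi}{2}$ separately: there $\Re(\lambda^{1/q})=0$, the surviving term merely oscillates, and asymptotic stability fails, which is why the inequality in \eqref{eq.lambda.spec} must be strict. The polynomial prefactors arising from Jordan blocks do not affect this classification, since they are dominated by the exponential in the unstable regime and leave the algebraic decay qualitatively unchanged in the stable regime. Collecting these cases yields both the sufficiency and the necessity of condition \eqref{eq.lambda.spec}.

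Finally I would verify that \eqref{eq.lambda.spec} and \eqref{ineq.lambda} describe the same set of eigenvalues, which is an elementary trigonometric check. The region $|\arg\lambda|>\tfrac{q\pi}{2}$ is the complement of the closed cone of half-angle $\tfrac{q\pi}{2}$ about the positive real axis. When $\Re(\lambda)\le 0$ the condition holds automatically (since $q<1$ implies $\tfrac{q\pi}{2}<\tfrac{\pi}{2}$), consistent with $\Re(\lambda)\tan\tfrac{q\pi}{2}\le 0\le|\Im(\lambda)|$; when $\Re(\lambda)>0$ the condition reads $\arctan\big(|\Im(\lambda)|/\Re(\lambda)\big)>\tfrac{q\pi}{2}$, which upon applying the increasing function $\tan$ is precisely $|\Im(\lambda)|>\Re(\lambda)\tan\tfrac{q\pi}{2}$.

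I expect the main obstacle to be the rigorous justification of the asymptotic dichotomy for $E_q$ — in particular the uniform control of the error terms across the sectors and the clean handling of the boundary rays $|\arg z|=\tfrac{q\pi}{2}$ — together with the bookkeeping required for non-diagonalizable $A$, where the two-parameter Mittag-Leffler functions must be shown to obey the same growth/decay dichotomy as the one-parameter function.
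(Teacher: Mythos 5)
The first thing to note is that the paper gives no proof of this theorem at all: it is imported from the literature, with explicit citations to Matignon (1996) and to Sabatier and Farges (2012), so there is no internal argument to compare against. Your proposal is, in substance, precisely the classical proof from those sources: Laplace transform of the Caputo derivative, solution representation $\mathbf{x}(t)=E_q(At^q)\mathbf{x}(0)$ reduced along the Jordan form, the sectorial asymptotic dichotomy of the Mittag-Leffler function with the boundary rays treated separately (whence the strict inequality in (\ref{eq.lambda.spec})), and the elementary trigonometric equivalence of (\ref{eq.lambda.spec}) and (\ref{ineq.lambda}). So this is the ``same approach'' in the only sense available, and the route is sound; two technical details should be corrected, though neither is fatal. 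First, a Jordan block of size $m$ does not contribute the two-parameter terms $t^{kq}E_{q,kq+1}(\lambda t^q)$: inverting $s^{q-1}(s^q-\lambda)^{-(k+1)}$ produces the three-parameter (Prabhakar) functions $t^{kq}E^{k+1}_{q,kq+1}(\lambda t^q)$, or equivalently the derivative terms $\frac{t^{kq}}{k!}\,\partial_\lambda^{k}E_q(\lambda t^q)$; these do obey the same growth/decay dichotomy, but that is exactly what must be verified (e.g.\ by differentiating the asymptotic expansion in $\lambda$, which requires the uniformity of the expansions on closed subsectors that you flag), and it is where the real work for non-diagonalizable $A$ sits. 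Second, the exponential asymptotics of the one-parameter function carries no algebraic prefactor: $E_q(z)=\frac{1}{q}e^{z^{1/q}}+O(|z|^{-1})$ in the sector $|\arg z|\le\frac{q\pi}{2}$; your factor $z^{(1-q)/q}$ is the specialization $z^{(1-\beta)/q}$ of the two-parameter formula, which equals $1$ when $\beta=1$ (this slip is harmless, since the exponential dominates in the unstable regime regardless). Finally, in the equivalence check you should exclude $\lambda=0$ explicitly, where $\arg$ is undefined, both conditions fail, and the system is indeed not asymptotically stable; after that, your case split between $\Re(\lambda)\le 0$ and $\Re(\lambda)>0$ is exactly right.
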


\begin{rem}\label{rem.real.comp}
{The integer order linear system $\dot{%
\mathbf{x}}=A \mathbf{x}$ is asymptotically stable if and only if $%
\Re(\lambda)<0$ for any $\lambda\in\sigma(A)$. Moreover, it is easy to see
that inequality (\ref{ineq.lambda}) is satisfied for any $%
\lambda\in\sigma(A) $ with $\Re(\lambda)<0$ and $q\in(0,1)$.

Therefore, if the integer order linear system $\dot{\mathbf{x}}=A \mathbf{x}$ is asymptotically stable, it follows that the fractional order system (\ref{eq.lin}) is also asymptotically stable, for any $q\in(0,1)$.

The converse of the above statement is generally not true. However, in the special case when all the eigenvalues of the matrix $A$ are real (for example if $A$ is a symmetric matrix), the inequality (\ref{ineq.lambda}) is satisfied if and only if all the eigenvalues of $A$ are strictly negative. We conclude that in this case, asymptotic stability of the fractional-order linear system (\ref{eq.lin}) is equivalent to the asymptotic stability of the integer order system $\dot{\mathbf{x}}=A \mathbf{x}$.

In general, for $0<q_1<q_2\leq 1$, if the linear system $^cD^{q_2} \mathbf{x}=A \mathbf{x}$ is asymptotically stable, from Theorem \ref{thm.linear.stab} it follows that
$$|\arg(\lambda)|>\frac{q_2\pi}{2}>\frac{q_1\pi}{2},\quad\forall~\lambda\in \sigma(A)$$
and hence, the linear system $^cD^{q_1} \mathbf{x}=A \mathbf{x}$ is also asymptotically stable.}
\end{rem}

\section{Complex-valued fractional-order HNNs}

A real-valued fractional-order neural network model of Hopfield type (FHNN)
with Caputo-type derivatives, introduced by \citet{ES-IJCNN-2011,Eva-Siva-4}%
, is represented by the following system
\begin{equation}
^{c}D^{q}x_{k}(t)=-a_{k}x_{k}(t)+\sum_{j=1}^{n}T_{kj}g_{j}(x_{j}(t))+I_{k},%
\quad \forall k=\overline{1,n},~\forall ~t>0  \label{frac.Hop}
\end{equation}%
where $q\in (0,1)$, $a_{k}>0$ are the self-regulating parameters of the
neurons, ${T}=({T}_{kj})_{n\times n}\in \mathbb{R}^{n\times n}$ is the
interconnection matrix, $g_{k}:\mathbb{R}\rightarrow \mathbb{R}$ are the
neuron input-output activation functions and ${I}_{k}\in \mathbb{R}$ denote
the external inputs.

In this paper, we generalize the previously considered model (\ref{frac.Hop}%
), introducing the following complex-valued neural network model with Caputo
fractional-order derivatives, described by the following system:
\begin{equation}
^{c}D^{q}z_{k}(t)=-a_{k}z_{k}(t)+\sum_{j=1}^{n}T_{kj}g_{j}(z_{j}(t))+I_{k},%
\quad \forall k=\overline{1,n},~\forall ~t>0  \label{frac.Hop.comp}
\end{equation}%
where $z_{k}:\mathbb{R}^{+}\rightarrow\mathbb{C}$ are the complex state
variables, $a_{k}>0$ are the self-regulating parameters of the neurons, ${T}%
=({T}_{kj})_{n\times n}\in\mathbb{C}^{n\times n}$, is the complex
interconnection matrix, $g_{k}:\mathbb{C} \rightarrow \mathbb{C}$ are
complex-valued activation functions and ${I}_{k}\in\mathbb{C}$ represent the
complex external inputs.

Several types of activation functions which are often used in complex-valued
neural networks are described by \citet{Kuroe_2003}. In particular, %
\citet{Georgiou_1992} describe the properties of the following complex
activation function
\begin{equation}  \label{activ.fun}
g(z)=\frac{z}{c_1+c_2|z|}\qquad\text{with }c_1,c_2>0.
\end{equation}
which proves to be useful in many practical applications.

Denoting $\mathbf{z}(t)=(z_{1}(t),z_{2}(t),...,z_{n}(t))^{T}$, $A=\text{diag}%
(a_{1},a_{2},...,a_{n})\in \mathbb{R}^{n\times n}$, $\mathbf{g}(\mathbf{z}%
)=(g_{1}(z_{1}),g_{2}(z_{2}),...,g_{n}(z_{n}))^{T}$ and $\mathbf{I}%
=(I_{1},I_{2},...,I_{n})^{T}\in \mathbb{C}^{n}$, the system (\ref%
{frac.Hop.comp}) can be written in the following vector form:
\begin{equation}
^{c}D^{q}\mathbf{z}(t)=-A\mathbf{z}(t)+T\mathbf{g}(\mathbf{z}(t))+\mathbf{I}.
\label{frac.Hop.comp.vec}
\end{equation}

Considering the real and the imaginary parts of the complex state vector,
interconnection matrix, input vector and activation functions respectively,
we denote
\begin{align*}
\mathbf{z}(t)& =\mathbf{x}(t)+i \mathbf{y}(t),\text{ where }\mathbf{x},\mathbf{y}:\mathbb{R}^{+}\rightarrow \mathbb{R}^{n},
\\
T& =T^{R}+iT^{I},\text{ where }T^{R},T^{I}\in \mathbb{R}^{n\times n}, \\
\mathbf{I}& =\mathbf{I}^{R}+i\mathbf{I}^{I},\text{ where }\mathbf{I}^{R},%
\mathbf{I}^{I}\in \mathbb{R}^{n}, \\
\mathbf{g}(\mathbf{z})& =\mathbf{g}(\mathbf{x}+i\mathbf{y})=\mathbf{g}^{R}(%
\mathbf{x},\mathbf{y})+i\mathbf{g}^{I}(\mathbf{x},\mathbf{y}) \\
& \text{with }\left\{
\begin{array}{c}
\mathbf{g}^{R}(\mathbf{x},\mathbf{y})=\left(
g_{1}^{R}(x_{1},y_{1}),g_{2}^{R}(x_{2},y_{2}),...,g_{n}^{R}(x_{n},y_{n})%
\right) ^{T} \\
\mathbf{g}^{I}(\mathbf{x},\mathbf{y})=\left(
g_{1}^{I}(x_{1},y_{1}),g_{2}^{I}(x_{2},y_{2}),...,g_{n}^{I}(x_{n},y_{n})%
\right) ^{T}%
\end{array}%
\right.
\end{align*}

With the above notations, system (\ref{frac.Hop.comp.vec}) becomes
\begin{equation*}
^{c}D^{q}(\mathbf{x}+i\mathbf{y}) =-A(\mathbf{x}+i\mathbf{y}%
)+\left(T^{R}+iT^{I}\right)\left(\mathbf{g}^{R}(\mathbf{x},\mathbf{y})+i%
\mathbf{g}^{I}(\mathbf{x},\mathbf{y})\right) +\mathbf{I}^{R}+i\mathbf{I}^{I}.
\end{equation*}
which is equivalent to the following real-valued fractional-order system:
\begin{equation}
\left\{
\begin{array}{c}
^{c}D^{q}\mathbf{x}(t)=-A\mathbf{x}+T^{R}\mathbf{g}^{R}(\mathbf{x},\mathbf{y}%
)-T^{I}\mathbf{g}^{I}(\mathbf{x},\mathbf{y})+\mathbf{I}^{R} \\
^{c}D^{q}\mathbf{y}(t)=-A\mathbf{y}+T^{R}\mathbf{g}^{I}(\mathbf{x},\mathbf{y}%
)+T^{I}\mathbf{g}^{R}(\mathbf{x},\mathbf{y})+\mathbf{I}^{I}.%
\end{array}
\right.  \label{frac.Hop.comp.r}
\end{equation}%
Denoting
\begin{align*}
\mathbf{u}(t) &=(\mathbf{x}(t),\mathbf{y}(t))^{T}\in\mathbb{R}^{2n}, \\
\tilde{A} &=\left(
\begin{array}{cc}
A & 0 \\
0 & A%
\end{array}
\right) = \text{diag}\left(
a_{1},a_{2},...,a_{n},a_{1},a_{2},...,a_{n}\right) \in\mathbb{R}^{2n\times
2n}, \\
\tilde{T} &=\left(
\begin{array}{cc}
T^{R} & -T^{I} \\
T^{I} & T^{R}%
\end{array}%
\right) \in \mathbb{R}^{2n\times 2n}, \\
\tilde{\mathbf{g}}(\mathbf{u})&=\tilde{\mathbf{g}}((\mathbf{x},\mathbf{y}%
)^T)=\left(\mathbf{g}^{R}(\mathbf{x},\mathbf{y}),\mathbf{g}^{I}(\mathbf{x},%
\mathbf{y})\right) ^{T}= \\
&=\left(
g_{1}^{R}(x_{1},y_{1}),..,g_{n}^{R}(x_{n},y_{n}),g_{1}^{I}(x_{1},y_{1}),..,g_{n}^{I}(x_{n},y_{n})\right)^{T}\in%
\mathbb{R}^{2n} \\
\tilde{\mathbf{I}}&=(\mathbf{I}^R,\mathbf{I}^I)^T\in \mathbb{R}^{2n},
\end{align*}
the $n$-dimensional complex system (\ref{frac.Hop.comp.vec}) is then
equivalent to the $2n$-dimensional fractional-order real-valued system
\begin{equation}
^{c}D^{q}\mathbf{u}(t)=-\tilde{A}\mathbf{u}(t)+\tilde{T}\mathbf{\tilde{g}}(%
\mathbf{u}(t))+\mathbf{\tilde{I}}.  \label{frac.Hop.comp.vect.2n}
\end{equation}
It has to be emphasized that system (\ref{frac.Hop.comp.vect.2n}) is
equivalent to a real-valued bidirectional associative memory (BAM) network
if and only if
\begin{equation*}
g_j^R(x_j,y_j)=g_j^R(y_j)\quad\text{and}\quad g_j^I(x_j,y_j)=g_j^I(x_j),\quad%
\text{for any }j=\overline{1,n},
\end{equation*}
i.e., the real part of the complex activation function $g_j$ depends only on
the imaginary part $y_j$ of the state variable, and the imaginary part of
the of $g_j$ depends only on the real part $x_j$ of the state variable.

\section{Stability and bifurcations}

In the following, let us consider $\mathbf{z}^{\ast }=\mathbf{x}^{\ast }+i%
\mathbf{y}^{\ast }\in\mathbb{C} ^{n}$ an equilibrium state of the
complex-valued fractional-order neural network (\ref{frac.Hop.comp.vec}):
\begin{equation*}
-A\mathbf{z}^{\ast }+T\mathbf{g}(\mathbf{z}^{\ast })+\mathbf{I}=0.
\end{equation*}
Equivalently, $\mathbf{u}^{\ast }=(\mathbf{x}^{\ast },\mathbf{y}^{\ast
})^{T} $ is a steady state of the real system (\ref{frac.Hop.comp.vect.2n}),
i.e. a solution of
\begin{equation*}
-\tilde{A}\mathbf{u}^{\ast }+\tilde{T}\tilde{\mathbf{g}}(\mathbf{u}^{\ast })+%
\tilde{\mathbf{I}}=0.
\end{equation*}
Obviously, $\mathbf{z}^{\ast }\in \mathbb{C}^{n}$ is a steady state of the
system (\ref{frac.Hop.comp.vec}) with the fractional order $q\in (0,1)$ if
and only if it is an equilibrium state of the corresponding integer order
system (i.e. for $q=1$). Therefore, the same results hold for the existence,
uniqueness or multiplicity of equilibrium states of fractional-order neural
networks, as in the case of integer-order neural networks. However, the
conditions for the asymptotic stability of an equilibrium state $\mathbf{z}%
^{\ast }\in \mathbb{C}^{n}$ are in general more strict in the case of the
corresponding integer-order system.

With the aim of studying the stability of the equilibrium state $\mathbf{z}%
^\ast$ in the framework of the fractional-order system (\ref%
{frac.Hop.comp.vec}), or equivalently, the stability of the equilibrium
state $\mathbf{u}^\ast$ of system (\ref{frac.Hop.comp.vect.2n}), we rely on
the linearization theorem recently proved by \citet{Li_Ma_2013}. This
linearization theorem is an analogue of the classical Hartman-Grobman
theorem for the case of integer-order dynamical systems. For a rigorous
application of this linearization theorem, we have to require that the
function $\tilde{\mathbf{g}}$ is of class $C^1$ (continuously
differentiable) on a neighborhood of the steady state $\mathbf{u}^\ast\in%
\mathbb{R}^{2n}$. Using the notations $g_{k}:\mathbb{C}\rightarrow\mathbb{C}$%
, with $g_k(z)=g_{k}(x+iy)=g_{k}^{R}(x,y)+ig_{k}^{I}(x,y)$, for the complex
activation functions, this is equivalent to the following assumption:
\begin{equation*}
\mathbf{(A_1)}\quad g_k^R \text{ and } g_k^I \text{ are of class $C^1$ in a
neighborhood of } (x_k^\ast,y_k^\ast)\in\mathbb{R}^2,~\forall~k=\overline{1,n}.
\end{equation*}

Due to the fact that the activation functions $g_k$ are usually assumed to
be bounded, we cannot require them to be holomorphic on the whole complex
plane $\mathbb{C}$ (i.e. entire functions). The reason is that, according to
Liouville's theorem, every bounded entire function must be constant.
Therefore, for simplicity, we only assume the following:
\begin{equation*}
\mathbf{(A_2)}\quad g_k \text{ are complex-differentiable at the point }
z_k^\ast=x_k^\ast+iy_k^\ast,~\forall~k=\overline{1,n}.
\end{equation*}
Hence, the Cauchy-Riemann conditions are satisfied:
\begin{equation}
\left\{
\begin{array}{l}
\dfrac{\partial g_{k}^{R}}{\partial x}(x_k^\ast,y_k^\ast)=\dfrac{\partial
g_{k}^{I}}{\partial y}(x_k^\ast,y_k^\ast), \\
\dfrac{\partial g_{k}^{R}}{\partial y}(x_k^\ast,y_k^\ast)=-\dfrac{\partial
g_{k}^{I}}{\partial x}(x_k^\ast,y_k^\ast).%
\end{array}%
\right.  \label{CR.cond}
\end{equation}

\begin{rem}
The activation function $g(z)$ given by (\ref{activ.fun}) satisfies assumption $(A_1)$. In fact, the real and imaginary parts $g^R$ and $g^I$ are of class $C^1$ on $\mathbb{R}^2$. A detailed analysis has been performed by \citet{Georgiou_1992}. Moreover, the function $g(z)$ is  complex-differentiable at $0$ and the complex derivative is $g'(0)=1$.
\end{rem}

Based on the previous assumptions and the Hartman-Grobman-type linearization
theorem \citep{Li_Ma_2013}, the asymptotic stability of the steady state $%
\mathbf{u}^{\ast }\in \mathbb{R}^{2n}$ of the fractional-order system (\ref%
{frac.Hop.comp.vect.2n}) is determined by the eigenvalues of the Jacobian
matrix
\begin{equation}
\tilde{J}(\mathbf{u}^{\ast })=-\tilde{A}+\tilde{T}D\tilde{\mathbf{g}}(%
\mathbf{u}^{\ast }),  \label{jacob1}
\end{equation}%
where
\begin{equation*}
D\tilde{\mathbf{g}}(\mathbf{u}^{\ast })=\left(
\begin{array}{cc}
D\mathbf{g}_{x}^{R}(\mathbf{u}^{\ast }) & D\mathbf{g}_{y}^{R}(\mathbf{u}%
^{\ast }) \\
-D\mathbf{g}_{y}^{R}(\mathbf{u}^{\ast }) & D\mathbf{g}_{x}^{R}(\mathbf{u}%
^{\ast })%
\end{array}%
\right)
\end{equation*}%
is a block matrix obtained using conditions (\ref{CR.cond}):
\begin{eqnarray*}
D\mathbf{g}_{x}^{R}(\mathbf{u}^{\ast }) &=&\text{diag}\left( \frac{\partial
g_{1}^{R}}{\partial x_{1}}(x_{1}^{\ast },y_{1}^{\ast }),\frac{\partial
g_{2}^{R}}{\partial x_{2}}(x_{2}^{\ast },y_{2}^{\ast }),...,\frac{\partial
g_{n}^{R}}{\partial x_{n}}(x_{n}^{\ast },y_{n}^{\ast })\right) , \\
D\mathbf{g}_{y}^{R}(\mathbf{u}^{\ast }) &=&\text{diag}\left( \frac{\partial
g_{1}^{R}}{\partial y_{1}}(x_{1}^{\ast },y_{1}^{\ast }),\frac{\partial
g_{2}^{R}}{\partial y_{2}}(x_{2}^{\ast },y_{2}^{\ast }),...,\frac{\partial
g_{n}^{R}}{\partial y_{n}}(x_{n}^{\ast },y_{n}^{\ast })\right) .
\end{eqnarray*}%
Furthermore, the Jacobian matrix becomes
\begin{equation*}
\tilde{J}(\mathbf{u}^{\ast })=-\tilde{A}+\tilde{T}D\mathbf{\tilde{g}}(\mathbf{u}^{\ast })=
-\left(
\begin{array}{cc}
A & 0 \\
0 & A%
\end{array}%
\right) +\left(
\begin{array}{cc}
T^{R} & -T^{I} \\
T^{I} & T^{R}%
\end{array}%
\right) \left(
\begin{array}{cc}
D_{1} & -D_{2} \\
D_{2} & D_{1}%
\end{array}%
\right) ,
\end{equation*}
where $A=\text{diag}\left(a_{1},..,a_{n}\right)$ , $D_{1}=D\mathbf{g}_{x}^{R}(\mathbf{u}^{\ast
})$, $D_{2}=-D\mathbf{g}_{y}^{R}(\mathbf{u}^{\ast })$ are real diagonal matrices.

In order to simplify the computations, we use the following notations
\begin{eqnarray*}
U &=&-A+T^{R}D_{1}-T^{I}D_{2}, \\
V &=&T^{R}D_{2}+T^{I}D_{1}.
\end{eqnarray*}%
Consequently, the Jacobian matrix (\ref{jacob1}) becomes%
\begin{equation*}
\tilde{J}(\mathbf{u}^{\ast })=\left(
\begin{array}{cc}
U & -V \\
V & U%
\end{array}%
\right) .
\end{equation*}

The characteristic polynomial of the Jacobian matrix $\tilde{J}(\mathbf{u}%
^{\ast })$ is%
\begin{align*}
P(\lambda ) &=\det (\tilde{J}(\mathbf{u}^{\ast })-\lambda I_{2n}) &  \\
&=\det \left(
\begin{array}{cc}
U-\lambda I_{n} & -V \\
V & U-\lambda I_{n}%
\end{array}%
\right) & (\text{Row}_1+i\cdot\text{Row}_2) \\
&=\det \left(
\begin{array}{cc}
U-\lambda I_{n}+iV & -V+iU-i\lambda I_{n} \\
V & U-\lambda I_{n}%
\end{array}%
\right) & (\text{Col}_2-i\cdot\text{Col}_1) \\
&=\det \left(
\begin{array}{cc}
U-\lambda I_{n}+iV & O_{n} \\
V & U-\lambda I_{n}-iV%
\end{array}%
\right) &  \\
&=\det (U+iV-\lambda I_{n})\cdot\det (U-iV-\lambda I_{n}). &
\end{align*}
Consequently, the roots of the characteristic polynomial $P(\lambda )$ are
either the eigenvalues of the matrix $U+iV$, or the eigenvalues of the
matrix $U-iV=\overline{U+iV}.$ In the following, let us denote $M=U+iV$ and $%
\sigma (M)$ the spectrum of $M$. Then, $\sigma (\tilde{J}(u^{\ast }))=\sigma (M)\cup
\sigma (\overline{M})=\sigma (M)\cup \overline{\sigma (M)}$, so we only need
to determine the eigenvalues of the matrix $M$.

We can easily see that
\begin{align}
M& =-A+T^{R}D_{1}-T^{I}D_{2}+i\left( T^{R}D_{2}+T^{I}D_{1}\right) =
\label{matrixM} \\
& =-A+\left( T^{R}+iT^{I}\right) \left( D_{1}+iD_{2}\right) =  \notag \\
& =-A+T\mathbf{g}^{\prime }(\mathbf{z}^{\ast }),  \notag
\end{align}%
where $\mathbf{g}^{\prime }(\mathbf{z}^{\ast })$ is the complex Jacobian
matrix of the function $\mathbf{g}$ at the equilibrium state $\mathbf{z}%
^{\ast }$:
\begin{equation*}
\mathbf{g}^{\prime }(\mathbf{z}^{\ast })=\text{diag}\left( g_{1}^{\prime
}(z_{1}^{\ast }),g_{2}^{\prime }(z_{2}^{\ast }),...,g_{n}^{\prime
}(z_{n}^{\ast })\right) .
\end{equation*}

Based on the above reasoning and Theorem \ref{thm.linear.stab}, we conclude:

\begin{prop}
\label{prop.complex.stab} If assumptions $(A_1)$ and $(A_2)$ hold, the
equilibrium state $\mathbf{z}^\ast$ of system (\ref{frac.Hop.comp.vec}) is
asymptotically stable if and only if all the eigenvalues of the
complex-valued matrix $-A+T\mathbf{g}^{\prime }(\mathbf{z}^{\ast })$ satisfy
$|\arg(\lambda)|>\frac{q\pi}{2}$.
\end{prop}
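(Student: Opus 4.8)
The plan is to transport the stability question entirely onto the equivalent $2n$-dimensional real system (\ref{frac.Hop.comp.vect.2n}) and then stitch together the two tools already assembled: the Hartman--Grobman-type linearization theorem of \citet{Li_Ma_2013} and the Matignon-type spectral criterion of Theorem \ref{thm.linear.stab}. First I would observe that assumption $(A_1)$ guarantees that $\tilde{\mathbf{g}}$ is of class $C^1$ in a neighborhood of $\mathbf{u}^\ast$, so the linearization theorem applies and the asymptotic stability of $\mathbf{u}^\ast$ as an equilibrium of (\ref{frac.Hop.comp.vect.2n}) is governed by the spectrum of the Jacobian $\tilde{J}(\mathbf{u}^\ast)$ from (\ref{jacob1}). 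Since the complex system (\ref{frac.Hop.comp.vec}) and the real system (\ref{frac.Hop.comp.vect.2n}) are equivalent, asymptotic stability of $\mathbf{z}^\ast$ is literally the same statement as asymptotic stability of $\mathbf{u}^\ast$.

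Next I would invoke Theorem \ref{thm.linear.stab} applied to the linearized system $^cD^q\mathbf{v}=\tilde{J}(\mathbf{u}^\ast)\mathbf{v}$, which is asymptotically stable if and only if $|\arg(\lambda)|>\frac{q\pi}{2}$ holds for every $\lambda\in\sigma(\tilde{J}(\mathbf{u}^\ast))$. Here assumption $(A_2)$ enters essentially: the Cauchy--Riemann conditions (\ref{CR.cond}) are what force the block structure of $D\tilde{\mathbf{g}}(\mathbf{u}^\ast)$, which in turn produces the factorization of the characteristic polynomial computed above, yielding the identity $\sigma(\tilde{J}(\mathbf{u}^\ast))=\sigma(M)\cup\overline{\sigma(M)}$ with $M=-A+T\mathbf{g}^{\prime}(\mathbf{z}^\ast)$ as in (\ref{matrixM}). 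This reduces the spectral condition from the $2n$ eigenvalues of $\tilde{J}(\mathbf{u}^\ast)$ to the $n$ eigenvalues of $M$.

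The final step is the elementary remark that $\arg(\overline{\lambda})=-\arg(\lambda)$, hence $|\arg(\lambda)|=|\arg(\overline{\lambda})|$ for every $\lambda\in\mathbb{C}$. Consequently the condition $|\arg(\lambda)|>\frac{q\pi}{2}$ holds for all of $\sigma(M)\cup\overline{\sigma(M)}$ if and only if it holds for all of $\sigma(M)$ alone, which is exactly the asserted criterion and closes the equivalence.

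As for the main difficulty: almost all of the computational content---the Jacobian, the Cauchy--Riemann block structure, and the polynomial factorization---is already carried out in the preceding text, so the remaining argument is essentially bookkeeping between the two cited theorems. The point demanding the most care is the precise hypothesis under which the linearization theorem of \citet{Li_Ma_2013} delivers a genuine ``if and only if'': the topological-conjugacy conclusion is cleanest in the hyperbolic regime, where no eigenvalue lies exactly on the boundary $|\arg(\lambda)|=\frac{q\pi}{2}$. I would therefore read the statement with strict inequalities throughout, deliberately leaving the borderline case---which marks the onset of the Hopf bifurcations studied later---outside the reach of this stability/instability dichotomy.
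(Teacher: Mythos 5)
Your proposal is correct and coincides with the paper's own argument: the paper proves this proposition precisely by combining the Li--Ma linearization theorem (applicable under $(A_1)$) with the Cauchy--Riemann-induced block structure, the factorization $\sigma(\tilde{J}(\mathbf{u}^{\ast}))=\sigma(M)\cup\overline{\sigma(M)}$ with $M=-A+T\mathbf{g}^{\prime}(\mathbf{z}^{\ast})$, and Theorem \ref{thm.linear.stab}, using exactly the observation $|\arg(\overline{\lambda})|=|\arg(\lambda)|$ to reduce to $\sigma(M)$. Your closing caveat about hyperbolicity (no eigenvalue on the boundary $|\arg(\lambda)|=\frac{q\pi}{2}$) is a sensible and accurate reading of the strict inequality in the statement, consistent with the paper's subsequent treatment of the borderline case as the locus of Hopf bifurcations.
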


%Given that one of the aims of this paper is to study the stability of a given steady state with respect to the system parameters, we introduce the following %definition:

%\begin{defi}
%The \emph{stability domain} of a steady state $\mathbf{z}^{\ast }\in \mathbb{%
%C}^{n}$ with respect to the fractional-order neural network (\ref{frac.Hop.comp.vec}) is the set $S_{q}(\mathbf{z}^{\ast })$ of system
%parameters $a_{k}$ and $T_{kj}$, $k,j\in \{1,2,...,n\}$ for which $\mathbf{z}^{\ast }$ is an asymptotically stable steady state of (\ref{frac.Hop.comp.vec}).
%\end{defi}

In the following section, the stability and bifurcation properties will be
investigated in the case of a special network structure, called the hub
structure.

\subsection{Complex-valued FHNN with hub structure}

In scale-free networks \citep{Barabasi_1999}, some nodes, called "hubs",
have many more connections than other nodes. In fact, the network as a whole
has a power-law distribution of the number of links connecting to a node (at
least asymptotically). In this type of networks, the existence of hub
structures is a common feature, playing a fundamental role in defining the
connectivity of the scale-free networks and in characterizing their
dynamical behavior. The dynamics of hubs for integer-order real-valued
neural networks was studied by \citet{Kitajima-Kurths}. Real-valued
fractional-order neural networks with hub structure have been investigated
by \citet{Eva-Siva-4} in the non-delayed case and by \citet{Wang_2014_ring}
in the delayed case.

In the following, we consider the complex-valued fractional-order neural
network of $n\geq 3$ neurons with hub structure
\begin{equation}
\left\{ \!\!\!%
\begin{array}{l}
^{c}D^{q}z_{1}(t)=-az_{1}(t)+\sum_{j=1}^{n}T_{1j}g_{j}(z_{j}(t))+I_{1} \\
^{c}D^{q}z_{j}(t)=-b_{j}z_{j}(t)+T_{j1}g_{1}(z_{1}(t))+T_{jj}g_{j}(z_{j}(t))+I_{j},\qquad\forall~j=\overline{2,n}
\end{array}%
\right.  \label{eq.hub}
\end{equation}%
where $a,b_{j}>0$, $T_{jk}\in \mathbb{\mathbb{C}}$, $\forall j,k=\overline{1,n}$. Here, the first neuron (called the central neuron) is the center of the
hub, and all the other $(n-1)$ neurons (called peripheral neurons) are
connected directly only to the central neuron and to themselves
(self-connections are present). The interconnection matrix of this neural
network is
\begin{equation*}
T=\left(
\begin{array}{ccccc}
T_{11} & T_{12} & T_{13} & ... & T_{1n} \\
T_{21} & T_{22} & 0 & ... & 0 \\
T_{31} & 0 & T_{33} & ... & 0 \\
... & ... & ... & ... & ... \\
T_{n1} & 0 & 0 & ... & T_{nn}%
\end{array}%
\right) \in \mathbb{C}^{n\times n},
\end{equation*}

Let $\mathbf{z}^{\ast }=(z_{1}^{\ast },z_{2}^{\ast },...,z_{n}^{\ast })^{T}$
an equilibrium of the system (\ref{eq.hub}) and, for simplicity, we denote

\begin{equation}
-a+T_{11}g_{1}^{\prime }(z_{1}^{\ast })=\alpha ,  \label{eq.alpha}
\end{equation}%
and assume that%
\begin{equation}
-b_{j}+T_{jj}g_{j}^{\prime }(z_{j}^{\ast })=\beta ,\quad \forall j=\overline{%
2,n}.  \label{eq.beta}
\end{equation}%
For example, if the peripheral neurons are identical, assumption (\ref%
{eq.beta}) holds always true.

Consequently, from (\ref{matrixM}), (\ref{eq.alpha}) and (\ref{eq.beta}) the
Jacobian matrix of the system (\ref{eq.hub}) is
\begin{equation*}
J(\mathbf{z}^{\ast })=\left(
\begin{array}{ccccc}
\alpha & T_{12}g_{2}^{\prime }(z_{2}^{\ast }) & T_{13}g_{3}^{\prime
}(z_{3}^{\ast }) & ... & T_{1n}g_{n}^{\prime }(z_{n}^{\ast }) \\
T_{21}g_{1}^{\prime }(z_{1}^{\ast }) & \beta & 0 & ... & 0 \\
T_{31}g_{1}^{\prime }(z_{1}^{\ast }) & 0 & \beta & ... & 0 \\
... & ... & ... & ... & ... \\
T_{n1}g_{1}^{\prime }(z_{1}^{\ast }) & 0 & 0 & ... & \beta%
\end{array}%
\right) .
\end{equation*}%
Using the notation%
\begin{equation}
\gamma =\sum_{k=2}^{n}T_{1k}T_{k1}g_{1}^{\prime }(z_{1}^{\ast
})g_{k}^{\prime }(z_{k}^{\ast }),  \label{eq.gamma}
\end{equation}
we obtain the characteristic equation
\begin{equation}
(\lambda -\beta )^{n-2}\left( \lambda ^{2}-(\alpha +\beta )\lambda +\alpha
\beta -\gamma \right) =0,  \label{eq.char.hub}
\end{equation}%
with $\alpha $, $\beta $, $\gamma \in \mathbb{C}$ defined above depending on
the equilibrium $\mathbf{z}^{\ast }$ and the system parameters $a$, $b_{i}$,
$T_{ij}$ of (\ref{eq.hub}).

\begin{rem}\label{rem.spectrum.hub}
Let $\lambda_1,\lambda_2$ denote the roots of the second order polynomial with complex coefficients
$$P(\lambda)= \lambda ^{2}-(\alpha +\beta )\lambda +\alpha\beta -\gamma.$$
If $n\geq 3$, it is easy to see that $\lambda_0=\beta$ is a root of the characteristic equation (\ref{eq.char.hub}), with the order of multiplicity $(n-2)$. Therefore:
$$\sigma(J(\mathbf{z}^\ast))=\{\beta,\lambda_1,\lambda_2\}.$$
On the other hand, in the particular case $n=2$, we simply obtain
$$\sigma(J(\mathbf{z}^\ast))=\{\lambda_1,\lambda_2\}.$$
\end{rem}

In the following, with the aim of analyzing the stability of the steady state $\mathbf{z}^{\ast }$
using the characteristic equation (\ref{eq.char.hub}) and Theorem \ref%
{thm.linear.stab}, we will concentrate our attention on the roots of the second-order polynomial $P(\lambda)$ defined in Remark \ref{rem.spectrum.hub}. We denote:
\begin{align*}
\rho_{1}&=\left\vert \alpha +\beta \right\vert & \theta _{1}&=Arg(\alpha
+\beta )\in(-\pi,\pi] \\
\rho_{2}&=\left\vert \alpha \beta -\gamma \right\vert>0 &
\theta_{2}&=Arg(\alpha \beta -\gamma )\in(-\pi,\pi]
\end{align*}
and
\begin{align*}
A&=\theta _{1}-\dfrac{\theta _{2}}{2}\in\left(-\frac{3\pi}{2},\frac{3\pi}{2}\right)\\
B&=\dfrac{1}{2}\arccos
\left\{ \dfrac{1}{2}\left[ \dfrac{\rho _{1}^{2}}{2\rho _{2}}-\sqrt{\dfrac{%
\rho _{1}^{4}}{4\rho _{2}^{2}}+4-2\dfrac{\rho _{1}^{2}}{\rho _{2}}\cos (2A)}%
\right] \right\}\in\left[0,\frac{\pi}{2}\right].
\end{align*}

The following result can be proved using standard mathematical tools:

\begin{lem}
\label{lem1.stab.hub} Let $\phi_{1},\phi_{2}\in(-\pi,\pi]$ denote
the principal arguments of the roots of the polynomial $P(\lambda)$.
The following cases occur:
\begin{itemize}
\item[(a)] If $\cos(A)\geq 0$, then $\displaystyle \phi_{1,2}=\dfrac{\theta _{2}}{2}\pm B$;

\item[(b)] If $\cos(A)<0$, then
\begin{itemize}
\item[(b.1)] if $\theta_2\leq -2B$ then $\displaystyle \phi_{1,2}=\dfrac{\theta _{2}}{2}+\pi\pm B$;
\item[(b.2)] if $\theta_2\in(-2B,2B]$ then $\displaystyle \phi_{1,2}=\dfrac{\theta _{2}}{2}\pm (\pi-B)$;
\item[(b.3)] if $\theta_2>2B$ then $\displaystyle \phi_{1,2}=\dfrac{\theta _{2}}{2}-\pi\pm B$;
\end{itemize}
\end{itemize}
\end{lem}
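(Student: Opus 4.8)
The plan is to work from Vieta's formulas for $P(\lambda)=\lambda^2-(\alpha+\beta)\lambda+(\alpha\beta-\gamma)$: the roots satisfy $\lambda_1+\lambda_2=\alpha+\beta=\rho_1 e^{i\theta_1}$ and $\lambda_1\lambda_2=\alpha\beta-\gamma=\rho_2 e^{i\theta_2}$, where $\rho_2>0$ guarantees neither root vanishes. Writing each root as $\lambda_k=\sqrt{\rho_2}\,e^{i\theta_2/2}\mu_k$, the product relation becomes $\mu_1\mu_2=1$, so I set $\mu_1=\nu$, $\mu_2=1/\nu$ for some nonzero $\nu\in\mathbb{C}$; the sum relation then reads $\nu+\nu^{-1}=(\rho_1/\sqrt{\rho_2})e^{iA}$ with $A=\theta_1-\theta_2/2$. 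Since $\sqrt{\rho_2}>0$ contributes no argument, the principal arguments of the roots are $\theta_2/2\pm\arg\nu$ reduced to $(-\pi,\pi]$, so the entire problem reduces to locating $\arg\nu$.

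Next I would solve $\nu+\nu^{-1}=c\,e^{iA}$ with $c=\rho_1/\sqrt{\rho_2}\ge 0$. Writing $\nu=e^{p+iB'}$ with $p,B'\in\mathbb{R}$ gives $\nu+\nu^{-1}=2\cosh(p+iB')=2\cosh p\cos B'+2i\sinh p\sin B'$, hence the real system $2\cosh p\cos B'=c\cos A$ and $2\sinh p\sin B'=c\sin A$. Eliminating $p$ via $\cosh^2 p-\sinh^2 p=1$ and setting $X=\cos 2B'$ yields, after the double-angle identities, the quadratic $2X^2-c^2X+c^2\cos 2A-2=0$ with $c^2=\rho_1^2/\rho_2$. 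Its smaller root reproduces exactly the quantity $\cos 2B$ from the statement, since $\tfrac14\sqrt{D}=\tfrac12\sqrt{D/4}$ converts the discriminant into the form under the square root in the definition of $B$. To justify selecting that root, I evaluate the upward parabola $f(X)=2X^2-c^2X+c^2\cos 2A-2$ at the endpoints: $f(-1)=2c^2\cos^2 A\ge 0$ and $f(1)=-2c^2\sin^2 A\le 0$. Hence exactly the smaller root lies in $[-1,1]$, so $\cos 2B'=X_-$ and $B'\equiv\pm B\pmod{\pi}$ with $B=\tfrac12\arccos X_-\in[0,\pi/2]$; this simultaneously shows that $B$ is well defined.

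It remains to pin down $\arg\nu$ modulo $2\pi$, not merely modulo $\pi$, and then reduce to the principal branch, which is where the case split originates. From $2\cosh p\cos B'=c\cos A$ with $\cosh p>0$ and $c\ge 0$, the sign of $\cos B'$ equals the sign of $\cos A$. If $\cos A\ge 0$ then $\cos B'\ge 0$, forcing $B'\equiv\pm B\pmod{2\pi}$, so the arguments are $\theta_2/2\pm B$; since $\theta_2/2\in(-\pi/2,\pi/2]$ and $B\in[0,\pi/2]$, both already lie in $(-\pi,\pi]$, giving case (a). If $\cos A<0$ then $\cos B'<0$, forcing $B'\equiv\pi\pm B\pmod{2\pi}$, so the raw arguments are $\theta_2/2\pm(\pi-B)$. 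Reducing these by $\pm 2\pi$ produces the three sub-cases: $\theta_2/2+(\pi-B)$ exceeds $\pi$ precisely when $\theta_2>2B$, and subtracting $2\pi$ gives $\theta_2/2-\pi\pm B$ (case (b.3)); $\theta_2/2-(\pi-B)$ drops below $-\pi$ precisely when $\theta_2<-2B$, and adding $2\pi$ gives $\theta_2/2+\pi\pm B$ (case (b.1)); and for $\theta_2\in(-2B,2B]$ no reduction is needed, yielding $\theta_2/2\pm(\pi-B)$ (case (b.2)).

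I expect the main obstacle to be exactly this branch bookkeeping: correctly choosing the root of the quadratic and, above all, performing the reduction to principal arguments with the right strict and non-strict inequalities at the thresholds $\theta_2=\pm 2B$ and at $\cos A=0$. The degenerate configurations where the elimination divides by zero, namely $\cos B'=0$ or $\sin B'=0$ (the endpoints $B=\pi/2$ or $B=0$) and $\sin A=0$, would need to be checked directly or settled by a continuity argument; everything else reduces to the routine trigonometric computation sketched above.
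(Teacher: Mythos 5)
Your proposal is correct, and while it rests on the same pillars as the paper's proof --- Vieta's formulas, a quadratic equation for the cosine of the argument difference of the two roots, and branch selection via the sign of $\cos A$ --- it organizes the argument differently. The paper keeps $\phi_1,\phi_2$ as unknowns, writes $\phi_1+\phi_2=\theta_2+2k\pi$ with $k\in\{-1,0,1\}$, divides the first Vieta relation by $(-1)^k e^{i\theta_2/2}$, and extracts from real and imaginary parts both the quadratic $p(x)=x^2-ux+u\cos(2A)-1$ for $\cos(\phi_1-\phi_2)$ (your $f$ is exactly $2p$, since $u=c^2/2$ and $\phi_1-\phi_2\equiv 2B'$) and two auxiliary sign-compatibility conditions; the cases (a), (b.1)--(b.3) then emerge from an enumeration over $k$ and over $\phi_1-\phi_2\in\{2B,2\pi-2B\}$, with several combinations discarded as leaving $(-\pi,\pi]$ (the paper's ``absurd'' subcases). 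Your normalization $\lambda_k=\sqrt{\rho_2}\,e^{i\theta_2/2}\mu_k$ with $\mu_1\mu_2=1$, combined with the hyperbolic substitution $\nu=e^{p+iB'}$, absorbs the $2k\pi$ ambiguity from the start, cleanly separates the modulus ratio ($p$) from the argument difference ($B'$), and compresses the paper's two sign conditions into the single relation $\textrm{sign}(\cos B')=\textrm{sign}(\cos A)$; the residual $B'\mapsto -B'$ ambiguity is a genuine symmetry (it swaps the two roots, with $p\mapsto -p$), so no further consistency check is needed --- a real economy over the paper's bookkeeping. Your endgame, the threshold analysis at $\theta_2=\pm 2B$ with the strict/non-strict inequalities, matches the lemma exactly, including the assignment of $\theta_2=-2B$ to (b.1) and $\theta_2=2B$ to (b.2).

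The one incompleteness is the degenerate configurations you defer, which the paper treats explicitly (the cases $A\in\{-\pi,-\frac{\pi}{2},0,\frac{\pi}{2},\pi\}$ occupy the second half of its appendix), and here two remarks make your patch concrete. First, the feared division by zero is avoidable: square the two real equations and cross-multiply to get $c^2\cos^2\!A\,\sin^2\!B'-c^2\sin^2\!A\,\cos^2\!B'=\sin^2(2B')$ unconditionally, so your quadratic for $X=\cos 2B'$ requires no caveat at all. Second, the claim that \emph{exactly} the smaller root of $f$ lies in $[-1,1]$ fails precisely when $f(1)=0$, i.e.\ when $\sin A=0$ or $c=0$: there $f(X)=(X-1)\left(2X-c^2+2\right)$, and for $c^2<4$ both roots lie in $[-1,1]$, so root-counting alone does not pin down $\cos 2B'$. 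This is settled by one observation rather than by continuity (which could not by itself decide the strict versus non-strict boundaries anyway): $\cos 2B'=1$ forces $\nu$ real, hence $c=|\nu+\nu^{-1}|=|\nu|+|\nu|^{-1}\geq 2$; so for $c<2$ the value $X_+=1$ is unattainable and $\cos 2B'=X_-$ after all, while for $c\geq 2$ the smaller root is $X_-=1$ in any case. With these two additions your argument covers every case the paper handles, in a form that is arguably tidier than the original.
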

\begin{proof}
See Appendix.
\end{proof}

Based on Remark \ref{rem.spectrum.hub} and Lemma \ref{lem1.stab.hub}, we obtain the following result regarding
the asymptotic stability of the equilibrium point $\mathbf{z}^{\ast }$.

\begin{prop}
\label{prop.stab.hub}
Let $q^{\ast\ast}$ be defined as follows:
$$
q^{\ast\ast}=\left\{
               \begin{array}{ll}
                 \displaystyle\frac{1}{\pi}\min\{|\theta_2-2B|,|\theta_2+2B|\}, & \textrm{if }\cos(A)\geq 0 \\\\
                 \displaystyle\frac{1}{\pi}\min\{|\theta_2+2\pi-2B|,|\theta_2+2\pi+2B|\}, & \textrm{if }\cos(A)< 0\textrm{ and }\theta_2\leq -2B \\\\
                 \displaystyle\frac{1}{\pi}\min\{|\theta_2+2\pi-2B|,|\theta_2-2\pi+2B|\}, & \textrm{if }\cos(A)< 0\textrm{ and }\theta_2\in(- 2B,2B] \\\\
                 \displaystyle\frac{1}{\pi}\min\{|\theta_2-2\pi+2B|,|\theta_2-2\pi-2B|\}, & \textrm{if }\cos(A)< 0\textrm{ and }\theta_2> 2B
               \end{array}
             \right.
$$
The steady state $\mathbf{z}^{\ast }$ of the system (\ref{eq.hub}) is
asymptotically stable if and only if $q\in (0,q^{\ast })\cap (0,1)$, where
\begin{equation}\label{qstar.hub}
q^\ast=\left\{
           \begin{array}{ll}
             \displaystyle\min\left\{\frac{2}{\pi}|Arg(\beta)|,q^{\ast\ast}\right\} &,~ \textrm{if }n\geq 3  \\
             q^{\ast\ast} & ,~\textrm{if }n=2
           \end{array}
         \right.
\end{equation}
\end{prop}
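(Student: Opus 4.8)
The plan is to read the stability threshold directly off the spectrum described in Remark~\ref{rem.spectrum.hub} together with the angle criterion of Theorem~\ref{thm.linear.stab}. By Proposition~\ref{prop.complex.stab}, the equilibrium $\mathbf{z}^\ast$ is asymptotically stable if and only if every eigenvalue $\lambda$ of $M=-A+T\mathbf{g}'(\mathbf{z}^\ast)$ satisfies $|\arg(\lambda)|>\frac{q\pi}{2}$. From Remark~\ref{rem.spectrum.hub} and the characteristic equation~(\ref{eq.char.hub}), the relevant eigenvalues are $\beta,\lambda_1,\lambda_2$ when $n\geq 3$ and only $\lambda_1,\lambda_2$ when $n=2$; the standing hypothesis $\rho_2=|\alpha\beta-\gamma|>0$ gives $\lambda_1\lambda_2\neq 0$, so the principal arguments $\phi_1,\phi_2$ are well defined. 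First I would note that, since $q\in(0,1)$ forces $\frac{q\pi}{2}\in(0,\frac{\pi}{2})$, the inequality $|\arg(\lambda)|>\frac{q\pi}{2}$ is equivalent to $q<\frac{2}{\pi}|\arg(\lambda)|$.

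The next step is to intersect these conditions over all eigenvalues. They hold simultaneously if and only if
\begin{equation*}
q<\frac{2}{\pi}\min_{\lambda\in\sigma(M)}|\arg(\lambda)|.
\end{equation*}
Writing $\arg(\beta)=Arg(\beta)$ and $\arg(\lambda_{1,2})=\phi_{1,2}$, this minimum equals $\min\{\frac{2}{\pi}|Arg(\beta)|,\frac{2}{\pi}\min\{|\phi_1|,|\phi_2|\}\}$ for $n\geq 3$ and $\frac{2}{\pi}\min\{|\phi_1|,|\phi_2|\}$ for $n=2$. Comparing with~(\ref{qstar.hub}), it therefore suffices to establish the single identity $q^{\ast\ast}=\frac{2}{\pi}\min\{|\phi_1|,|\phi_2|\}$; the strictness of $q<q^\ast$, and hence the open interval, comes from the fact that equality $|\arg(\lambda)|=\frac{q\pi}{2}$ already fails the criterion of Theorem~\ref{thm.linear.stab}.

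To prove that identity I would substitute the explicit values of $\phi_{1,2}$ furnished by Lemma~\ref{lem1.stab.hub} and compute $2|\phi_{1,2}|$ in each of the four cases. For instance, in case (a) one has $2\phi_{1,2}=\theta_2\pm 2B$, so $\frac{2}{\pi}\min\{|\phi_1|,|\phi_2|\}=\frac{1}{\pi}\min\{|\theta_2+2B|,|\theta_2-2B|\}$, exactly the first branch of $q^{\ast\ast}$. In cases (b.1), (b.2) and (b.3) the same computation yields $2\phi_{1,2}=\theta_2+2\pi\pm 2B$, $2\phi_{1,2}=\theta_2\pm(2\pi-2B)$ and $2\phi_{1,2}=\theta_2-2\pi\pm 2B$ respectively, which reproduce the remaining three branches. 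Once this case check is complete, the definition~(\ref{qstar.hub}) together with the equivalence of the first paragraph gives asymptotic stability precisely for $q\in(0,q^\ast)\cap(0,1)$.

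I expect no genuine conceptual obstacle, because the substantive analytic work — determining the principal arguments $\phi_1,\phi_2$ of the roots of the complex quadratic $P(\lambda)$ — is entirely contained in Lemma~\ref{lem1.stab.hub}, whose proof is deferred to the Appendix. The only points demanding care are clerical: tracking absolute values and the $\pm 2\pi$ shifts across the case split of Lemma~\ref{lem1.stab.hub}, and observing that whenever the computed threshold exceeds $1$ the intersection with $(0,1)$ simply renders $\mathbf{z}^\ast$ stable for every admissible fractional order.
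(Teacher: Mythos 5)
Your proposal is correct and follows essentially the same route as the paper: the paper's own (very brief) proof likewise invokes Theorem~\ref{thm.linear.stab} on the spectrum $\{\beta,\lambda_1,\lambda_2\}$ (resp.\ $\{\lambda_1,\lambda_2\}$ for $n=2$) from Remark~\ref{rem.spectrum.hub} and then appeals to Lemma~\ref{lem1.stab.hub} for the principal arguments $\phi_{1,2}$, exactly as you do. Your explicit case-by-case verification that $2\phi_{1,2}$ reproduces the four branches of $q^{\ast\ast}$ merely spells out the substitution the paper leaves implicit, and your computations in all four cases check out.
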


\begin{proof}
The steady state $\mathbf{z}^{\ast }$ of the system (\ref{eq.hub}) is asymptotically stable if and only if the arguments of the all the roots of the characteristic equation (\ref{eq.char.hub}) satisfy inequality (\ref{eq.lambda.spec}) from Theorem \ref{thm.linear.stab}. If $n\geq 3$, with the notations from Remark \ref{rem.spectrum.hub}, we know that these roots are $\{\beta,\lambda_1,\lambda_2\}$. On the other hand, when $n=2$, the characteristic roots are only $\{\lambda_1,\lambda_2\}$. Based on Lemma \ref{lem1.stab.hub}, we obtain the desired result.
\end{proof}

\begin{rem}
The case $n=2$ corresponds to the case of simple hub containing a central neuron and only one
peripheral neuron described by a simple two-dimensional complex-valued neural
network%
\begin{equation}
\left\{ \!\!\!%
\begin{array}{l}
^{c}D^{q}z_{1}(t)=-az_{1}(t)+T_{11}g_{1}(z_{1}(t))+T_{12}g_{2}(z_{2}(t))+I_{1}
\\
^{c}D^{q}z_{2}(t)=-bz_{2}(t)+T_{21}g_{1}(z_{1}(t))+T_{22}g_{2}(z_{2}(t))+I_{2}%
\end{array}%
\right.   \label{eq.2d}
\end{equation}
The stability analysis of the steady state $\mathbf{z}^{\ast }$ requires the
study of the characteristic equation
\begin{equation}
P(\lambda)=\lambda ^{2}-(\alpha +\beta )\lambda +\alpha \beta -\gamma =0.
\label{eq.char.hub.2d}
\end{equation}%
where $\alpha =-a+T_{11}g_{1}^{\prime }(z_{1}^{\ast })$, $\beta
=-b+T_{22}g_{2}^{\prime }(z_{2}^{\ast })$ and $\gamma
=T_{12}T_{21}g_{1}^{\prime }(z_{1}^{\ast })g_{2}^{\prime }(z_{2}^{\ast })$.
Based on Proposition \ref{prop.stab.hub}, we obtain that the steady state $\mathbf{z}^{\ast }$ of (\ref{eq.2d}) is asymptotically stable if and only if
any $q\in (0,q^{\ast\ast})\cap(0,1)$.
\end{rem}

It is worth mentioning that in the case of fractional-order dynamical
systems very few theoretical results are known at this time regarding
bifurcation phenomena. \citet{ElSaka} attempt to formulate conditions for
the occurrence of Hopf bifurcations, based on observations arising from
numerical simulations. However, the complete characterization of the Hopf
bifurcation in fractional-order dynamical systems, as well as the stability
of the resulting limit cycle, are still open questions.

\begin{rem}
Because a steady state $\mathbf{z}^{\ast }$ of (\ref{eq.hub}) does not
depend on the fractional order $q\in (0,1)$, a good choice for the
bifurcation parameter that may be considered in system (\ref{eq.hub}) is the
fractional order $q$ itself.

According to Proposition \ref{prop.stab.hub}, if $q^\ast<1$, the Jacobian
matrix $J(\mathbf{z}^\ast)$ has a pair of complex eigenvalues $\pm \frac{%
q^\ast\pi}{2}$, and according to \citet{ElSaka}, this corresponds to a Hopf
bifurcation in the fractional-order neural network (\ref{eq.hub}). The
steady state $\mathbf{z}^\ast$ of (\ref{eq.hub}) is asymptotically stable if
and only if $q\in(0,q^\ast)$. As $q$ increases and crosses the critical
value $q^\ast$, the steady state $\mathbf{z}^\ast$ becomes unstable, and a
limit cycle is expected to appear in a neighborhood of $\mathbf{z}^\ast$,
due to the Hopf bifurcation phenomenon.

However, if $q^\ast\geq 1$, it follows that $\mathbf{z}^\ast$ is
asymptotically stable for any $q\in(0,1)$.
\end{rem}

\begin{ex}
The following complex-valued fractional-order neural network of three
neurons with hub structure is considered:
\begin{equation}  \label{ex.hub}
\left\{%
\begin{array}{l}
^cD^{q} z_1(t)=-z_1(t)+(2-5i)g(z_1(t))-(2+i)g(z_2(t))+(2+i)g(z_3(t)) \\
^cD^{q} z_2(t)=-2z_2(t)+3g(z_1(t))+(1+i)g(z_2(t)) \\
^cD^{q} z_3(t)=-2z_3(t)+(1-i)g(x_1(t))+(1+i)g(z_3(t))%
\end{array}
\right.
\end{equation}
where the activation function is $g(z)=\displaystyle\frac{z}{1+|z|}$, with
the complex derivative $g^{\prime }(0)=1$. In this neural network, $a=1$, $%
b_2=b_3=2$, $T_{11}=2-5i$, $T_{22}=T_{33}=1+i$, $T_{12}=-2-i$$, T_{21}=3$, $%
T_{13}=2+i$ and $T_{31}=1-i$.

For the steady state $\mathbf{z}^\ast=\mathbf{0}$, the parameters given by
the equations (\ref{eq.alpha}), (\ref{eq.beta}) and (\ref{eq.gamma}) are $%
\alpha=1-5i$, $\beta=-1+i$ and $\gamma=-3-4i$. We can easily compute the
critical value of the fractional order $q$ given by (\ref{qstar.hub}), $%
q^\ast=0.844976$. Hence, the null solution is asymptotically stable if and
only if $q\in(0,0.844976)$ and unstable for $q\in(0.844976,1)$. At $q=q^\ast$%
, a Hopf bifurcation is expected to take place.

Indeed, when the fractional order increases above the critical value $%
q^\ast=0.844976$, numerical simulations show the appearance of an
asymptotically stable limit cycle in a neighborhood of the origin (see Fig %
\ref{fig:hub}). For all numerical simulations, the generalization of the
Adams-Bashforth-Moulton predictor-corrector method has been used, described
by \citet{Diethelm}. Numerical simulations also show the existence of an
asymptotically stable limit cycle for any $q\in(q^\ast,1)$.

\begin{figure}[tbp]
\centering
\begin{tabular}{ccc}
\includegraphics[width=0.47\textwidth]{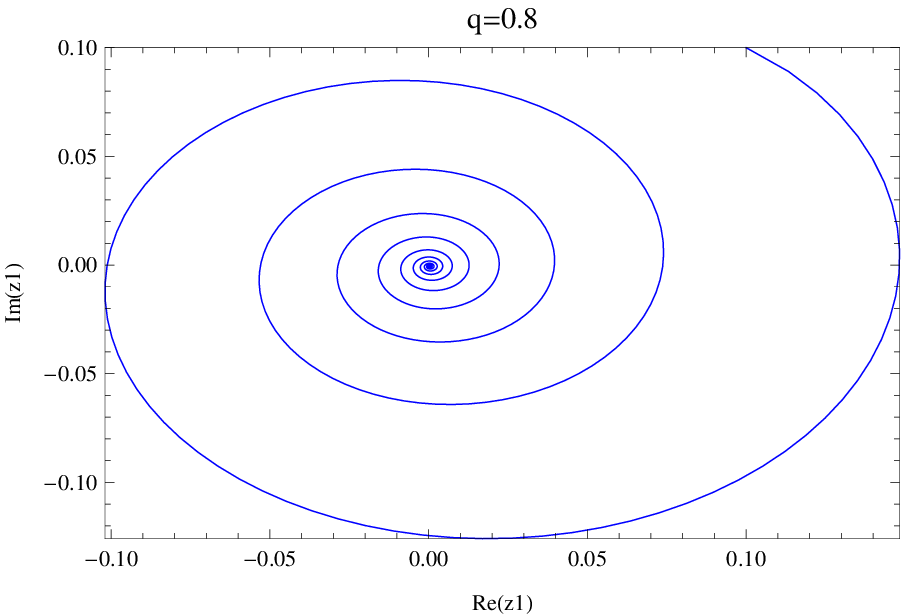} & %
\includegraphics[width=0.47\textwidth]{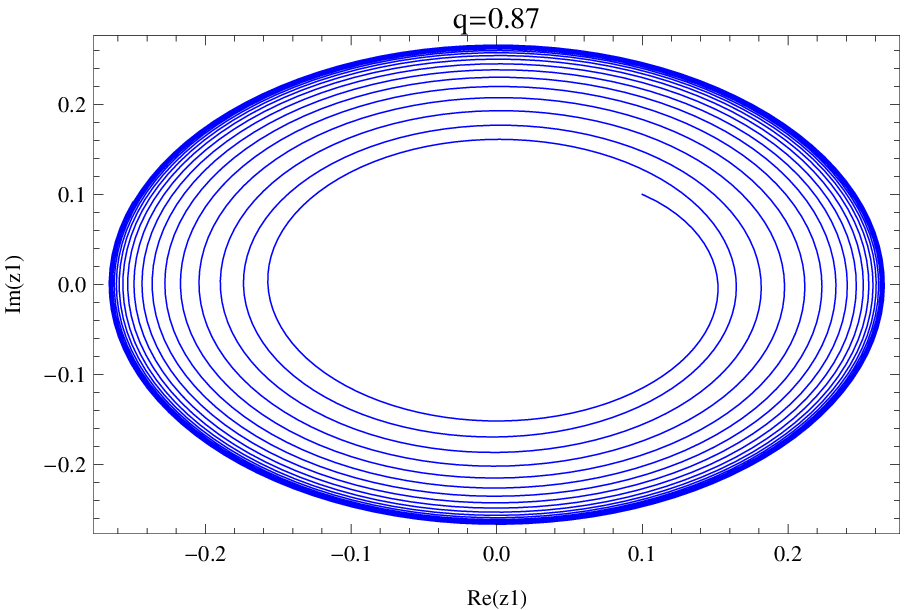} &  \\
&  &
\end{tabular}
.
\caption{Hopf bifurcation occurring in a neighborhood of the null solution
of system (\protect\ref{ex.hub}) at the critical value of the fractional
order $q^\ast=0.844976$. The null solution is asymptotically stable for $%
q=0.8$ (left) and unstable for $q=0.87$ (right). An asymptotically stable
limit cycle appears due to the supercritical Hopf bifurcation as $q$
increases above the critical value (right).}
\label{fig:hub}
\end{figure}
\end{ex}

\subsection{Complex-valued FHNN with ring structure}

Another simple connectivity structure frequently considered in neural
networks is the ring structure, in which every neuron of the network is only
connected to its two closest neighbors. Ring architectures have been found
in a variety of neural structures, such as the hippocampus, cerebellum,
neocortex, and even in chemistry and electrical engineering. It is a
well-known fact that the real cortical connectivity pattern in these network
structures is extremely sparse: most connections are between nearby cells,
and long-range connections become progressively more infrequent %
\citep{Hirsch}.

The dynamics of continuous-time neural networks with ring structure,
including aspects such as stability, bifurcations, patters of oscillations,
nonlinear waves, have been extensively studied by %
\citet{ring-Baldi-Atiya,ring-Campbell-Ruan-Wei,ring-Guo-Huang-4,ring-Guo,ring-Wei-Jiang,ring-Guo-Huang-3,ring-Guo-Huang-2,ring-Guo-Huang-1,ring-Bungay-Campbell,ring-Lu-Guo}%
. Bifurcation phenomena and chaotic behavior in discrete-time delayed neural
rings have been discussed by \citet{KB6,Eva_IJCNN2009}.

Real-valued fractional-order neural networks with ring structure, with and
without delays, have been recently analyzed by %
\citet{Eva-Siva-4,Wang_2014_ring,Wang_2015_ring}.

Considering, for simplicity, a null external input, we analyze a
complex-valued fractional-order neural network with ring structure described
by:
\begin{equation}  \label{frac.ring}
\left\{%
\begin{array}{lr}
^cD^qz_1(t)= & -a_1z_1(t)+T_{1,1}g_1(z_1(t))+\hfill \\
& +T_{1,2}g_2(z_2(t))+T_{1,n}g_n(z_n(t))\hfill \\
^cD^qz_j(t)= & -a_2z_j(t)+T_{j,j-1}g_{j-1}(z_{j-1}(t))+\hfill \\
& +T_{j,j}g_j(z_j(t))+T_{j,j+1}g_{j+1}(z_{j+1}(t))\qquad,~ \forall~j=%
\overline{2,n-1} \\
^cD^qz_n(t)= & -a_nz_n(t)+T_{n,1}g_1(z_1(t))+\hfill \\
& +T_{n,n-1}g_{n-1}(z_{n-1}(t))+T_{n,n}g_n(z_n(t))\hfill%
\end{array}
\right.
\end{equation}
where $n\geq 3$. The interconnection matrix of system (\ref{frac.ring}) is
\begin{equation}  \label{eq.B.circ}
T=\left(
\begin{array}{cccccccc}
T_{1,1} & T_{1,2} & 0 & 0 & ... & 0 & 0 & T_{1,n} \\
T_{2,1} & T_{2,2} & T_{2,3} & 0 & ... & 0 & 0 & 0 \\
0 & T_{3,2} & T_{3,3} & T_{3,4} & ... & 0 & 0 & 0 \\
... & ... & ... & ... & ... & ... & ... & ... \\
0 & 0 & 0 & 0 & ... & T_{n-2,n-2} & T_{n-2,n-1} & 0 \\
0 & 0 & 0 & 0 & ... & T_{n-1,n-2} & T_{n-1,n-1} & T_{n-1,n} \\
T_{n,1} & 0 & 0 & 0 & ... & 0 & T_{n,n-1} & T_{n,n} \\
&  &  &  &  &  &  &
\end{array}
\right)\in\mathbb{C}^{n\times n}
\end{equation}

Let $\mathbf{z}^\ast$ be a steady state of system (\ref{frac.ring}). We will
make the following simplifying assumptions:
\begin{equation}  \label{eq.alpha.ring}
-a_j+T_{j,j}g_j^{\prime }(z_j^\ast)=\alpha,\quad\forall~j=\overline{1,n}
\end{equation}
\begin{equation}  \label{eq.beta.ring}
T_{j,j+1}g_{j+1}^{\prime }(z_{j+1}^\ast)=\beta,\quad\forall~j=\overline{1,n}
\end{equation}
and
\begin{equation}  \label{eq.gamma.ring}
T_{j,j-1}g_{j-1}^{\prime }(z_{j-1}^\ast)=\gamma,\quad\forall~j=\overline{1,n}
\end{equation}
where the complex parameters $\alpha$, $\beta$ and $\gamma$ depend on the
steady state $\mathbf{z}^\ast$ which is being analyzed and the system
parameters $a_j$, $T_{j,k}$ of (\ref{frac.ring}).

It is easy to see that these assumptions are satisfied in the case of
identical neurons, having the same activation function $g_j=g_0$, $j=%
\overline{1,n}$, the same self-regulating parameters $a_j=a_0>0$, $j=%
\overline{1,n}$, and the connection weights satisfying $T_{j,j}=T_0$
(self-connection), $T_{j,j+1}=T_1$ (forward connection) and $T_{j,j-1}=T_2$
(backward connection), for any $j=\overline{1,n}$. With these assumptions,
we may consider steady states $\mathbf{z}^\ast$ with equal complex scalar
components $z_1^\ast=z_2^\ast=...=z_n^\ast=z^\ast$, such that
\begin{equation}  \label{eq.ring.scalar}
-a_0z^\ast+(T_0+T_1+T_2)g_0(z^\ast)=0.
\end{equation}
Obviously, $z^\ast=0$ is a solution of this equation, as the trivial
solution is a steady state of (\ref{frac.ring}). Besides the null solution,
the equation (\ref{eq.ring.scalar}) may have other solutions as well, and
therefore, the system (\ref{frac.ring}) may have multiple steady states.

Let us return to the stability analysis of a steady state $\mathbf{z}^\ast$
of system (\ref{frac.ring}). If the assumptions (\ref{eq.alpha.ring}), (\ref%
{eq.beta.ring}) and (\ref{eq.gamma.ring}) are fulfilled, we obtain that the
Jacobian matrix of system (\ref{frac.ring}) is a circulant matrix of the form
\begin{equation*}
J(\mathbf{z}^\ast)=\text{circ}(\alpha,\beta,0,...,0,\gamma)
\end{equation*}
The eigenvalues of the complex circulant matrix $J(\mathbf{z}^\ast)$ are (see for example \citet{Circulant} and \citet{Kra2012circulant}):
\begin{equation*}
\lambda_p=\alpha+\beta\omega^p+\gamma\overline{\omega}^p,\quad
\forall~p\in\{0,1,..,n-1\}
\end{equation*}
where $\omega=\exp{\left(\frac{2\pi i}{n}\right)}$.

We easily obtain the following sufficient condition for the asymptotic
stability of the steady state $\mathbf{z}^\ast$:

\begin{prop}
\label{prop.stab.ring.simple} If conditions (\ref{eq.alpha.ring}), (\ref%
{eq.beta.ring}) and (\ref{eq.gamma.ring}) are satisfied and
\begin{equation}  \label{cond.as.stab.ring.gen}
\Re(\alpha)+|\beta+\overline{\gamma}|<0,
\end{equation}
the steady state $\mathbf{z}^\ast$ of system (\ref{frac.ring}) is
asymptotically stable, regardless of the fractional order $q\in(0,1)$.
\end{prop}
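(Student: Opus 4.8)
The plan is to reduce the fractional-order stability question to a statement about the real parts of the eigenvalues of the circulant Jacobian. By Proposition \ref{prop.complex.stab}, the steady state $\mathbf{z}^\ast$ is asymptotically stable if and only if every eigenvalue $\lambda_p$ of $J(\mathbf{z}^\ast)=M$ satisfies $|\arg(\lambda_p)|>\frac{q\pi}{2}$. As recalled in Remark \ref{rem.real.comp}, whenever $\Re(\lambda_p)<0$ this inequality holds automatically for every $q\in(0,1)$, since then $\lambda_p$ lies in the open left half-plane and $|\arg(\lambda_p)|>\frac{\pi}{2}>\frac{q\pi}{2}$. (Note that the conjugate eigenvalues in $\overline{\sigma(M)}$ share the same real parts, so it suffices to treat the $\lambda_p$.) Hence my entire task reduces to showing that hypothesis (\ref{cond.as.stab.ring.gen}) forces $\Re(\lambda_p)<0$ for all $p\in\{0,1,\dots,n-1\}$, which simultaneously yields the desired $q$-independent conclusion.

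First I would take the explicit eigenvalue formula $\lambda_p=\alpha+\beta\omega^p+\gamma\overline{\omega}^p$ with $\omega=\exp(2\pi i/n)$ and isolate its real part. The key algebraic observation is that, since $\Re(w)=\Re(\overline{w})$ for any complex number $w$, one has $\Re(\gamma\overline{\omega}^p)=\Re\big(\overline{\gamma\overline{\omega}^p}\big)=\Re(\overline{\gamma}\,\omega^p)$, using $\overline{\overline{\omega}^p}=\omega^p$. This lets me merge the two rotating terms into a single one whose coefficient is exactly $\beta+\overline{\gamma}$:
$$\Re(\beta\omega^p+\gamma\overline{\omega}^p)=\Re\big((\beta+\overline{\gamma})\,\omega^p\big).$$

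Next I would bound this contribution uniformly in $p$. Because $|\omega^p|=1$, the elementary inequality $|\Re(w)|\le|w|$ gives $\Re\big((\beta+\overline{\gamma})\omega^p\big)\le|(\beta+\overline{\gamma})\omega^p|=|\beta+\overline{\gamma}|$ for every $p$. Therefore
$$\Re(\lambda_p)=\Re(\alpha)+\Re\big((\beta+\overline{\gamma})\omega^p\big)\le\Re(\alpha)+|\beta+\overline{\gamma}|<0,$$
the final strict inequality being precisely the hypothesis (\ref{cond.as.stab.ring.gen}). Since this holds for all $p$, every eigenvalue of $J(\mathbf{z}^\ast)$ lies in the open left half-plane, and the asymptotic stability of $\mathbf{z}^\ast$ follows for arbitrary $q\in(0,1)$.

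There is essentially no hard step: the argument collapses to a one-line bound once the identity $\Re(\gamma\overline{\omega}^p)=\Re(\overline{\gamma}\,\omega^p)$ is spotted. The only point requiring a little care is recognizing why the hypothesis is naturally phrased in terms of $|\beta+\overline{\gamma}|$ rather than the cruder $|\beta|+|\gamma|$: it is the modulus of the combined coefficient $\beta+\overline{\gamma}$ of the single unit-modulus factor $\omega^p$ that governs the oscillatory part of $\Re(\lambda_p)$, so this condition is exactly the sharp sufficient one that the computation produces.
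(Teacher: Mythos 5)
Your proof is correct and follows essentially the same route as the paper: the identity $\Re(\beta\omega^p+\gamma\overline{\omega}^p)=\Re\bigl((\beta+\overline{\gamma})\omega^p\bigr)$, the bound $\Re\bigl((\beta+\overline{\gamma})\omega^p\bigr)\leq|\beta+\overline{\gamma}|$, and the conclusion that all eigenvalues lie in the open left half-plane (hence stability for every $q\in(0,1)$) are exactly the steps of the paper's argument. You merely make explicit the justification of the merging identity and the $q$-independence via Remark \ref{rem.real.comp}, which the paper leaves implicit.
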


\begin{proof} We have:
\begin{align*}
\Re(\lambda_p)&=\Re(\alpha)+\Re(\beta\omega^p+\gamma\overline{\omega}^p)=\\
&=\Re(\alpha)+\Re((\beta+\overline{\gamma})\omega^p)\leq\\
&=\Re(\alpha)+|(\beta+\overline{\gamma})\omega^p|=\\
&=\Re(\alpha)+|\beta+\overline{\gamma}|<0,
\end{align*}
for any $p\in\{0,1,..,n-1\}$, and hence, all eigenvalues of the Jacobian matrix $J(\mathbf{z}^\ast)$ are in the left half-plane. We obtain that $\bold{z}^\ast$ is asymptotically stable, regardless of the fractional order $q\in(0,1)$.
\end{proof}

Moreover, based on Proposition \ref{prop.complex.stab}, the following result
holds:

\begin{prop}
\label{prop.stab.ring.q} If conditions (\ref{eq.alpha.ring}), (\ref%
{eq.beta.ring}) and (\ref{eq.gamma.ring}) are satisfied and
\begin{equation}  \label{cond.as.stab.ring.q}
\Re(\alpha)+|\beta+\overline{\gamma}|\geq 0,
\end{equation}
the steady state $\mathbf{z}^\ast$ of system (\ref{frac.ring}) is
asymptotically stable if and only if
\begin{equation}  \label{ineq.q.stab.ring}
q<\frac{2}{\pi}\min\limits_{p=\overline{0,n-1}}\left|\arg(\alpha+\beta%
\omega^p+\gamma\overline{\omega}^p)\right|,
\end{equation}
where $\omega=\exp{\left(\frac{2\pi i}{n}\right)}$. If the critical value
\begin{equation*}
q^\star=\frac{2}{\pi}\min\limits_{p=\overline{0,n-1}}\left|\arg(\alpha+\beta%
\omega^p+\gamma\overline{\omega}^p)\right|\in(0,1)
\end{equation*}
a Hopf bifurcation is expected to occur in system (\ref{frac.ring}) in a
neighborhood of the steady state $\mathbf{z}^\ast$ .
\end{prop}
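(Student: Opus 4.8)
The plan is to apply Proposition \ref{prop.complex.stab} directly, since the Jacobian matrix $J(\mathbf{z}^\ast)=\text{circ}(\alpha,\beta,0,\dots,0,\gamma)$ coincides with the matrix $M=-A+T\mathbf{g}^{\prime}(\mathbf{z}^{\ast})$ whose spectrum governs the stability of the steady state. The eigenvalues $\lambda_p=\alpha+\beta\omega^p+\gamma\overline{\omega}^p$, $p=\overline{0,n-1}$, have already been obtained from the standard spectral formula for circulant matrices. By Proposition \ref{prop.complex.stab}, the steady state $\mathbf{z}^\ast$ is asymptotically stable if and only if every eigenvalue satisfies $|\arg(\lambda_p)|>\frac{q\pi}{2}$.

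First I would observe that, because this strict inequality must hold simultaneously for all $p\in\{0,1,\dots,n-1\}$, it is equivalent to the single condition $\frac{q\pi}{2}<\min_{p=\overline{0,n-1}}|\arg(\lambda_p)|$, the minimum being attained since there are only finitely many eigenvalues. Solving for $q$ yields exactly the threshold inequality (\ref{ineq.q.stab.ring}), which establishes the asymptotic stability criterion. The hypothesis $\Re(\alpha)+|\beta+\overline{\gamma}|\geq 0$ serves to delineate this regime from the unconditionally stable case of Proposition \ref{prop.stab.ring.simple}: in the complementary situation $\Re(\alpha)+|\beta+\overline{\gamma}|<0$, the bound $\Re(\lambda_p)\leq\Re(\alpha)+|\beta+\overline{\gamma}|$ already places every eigenvalue in the open left half-plane, so that $\min_p|\arg(\lambda_p)|>\frac{\pi}{2}$ and $q^\star>1$. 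Under the present hypothesis, by contrast, the threshold $q^\star$ may fall below $1$, at which point the stability condition becomes genuinely restrictive.

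For the Hopf bifurcation claim, I would argue that when $q^\star\in(0,1)$ the conjugate pair of eigenvalues realizing the minimum argument satisfies $|\arg(\lambda_p)|=\frac{q^\star\pi}{2}$ exactly at $q=q^\star$. Hence, as $q$ increases through $q^\star$, for $q$ slightly larger these eigenvalues satisfy $|\arg(\lambda_p)|<\frac{q\pi}{2}$, so the criterion of Theorem \ref{thm.linear.stab} is violated and the steady state passes from asymptotically stable to unstable. Following the criterion proposed by \citet{ElSaka}, this crossing of the critical argument is identified with a Hopf bifurcation in the fractional-order system, and a limit cycle is expected to emerge in a neighborhood of $\mathbf{z}^\ast$.

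The bulk of the work, namely computing the circulant spectrum and establishing the argument-based stability test of Proposition \ref{prop.complex.stab}, has already been carried out, so the proof itself is short. The only delicate point is the logical equivalence between the universally quantified statement ``$|\arg(\lambda_p)|>\frac{q\pi}{2}$ for all $p$'' and the compact ``$\min$'' formulation, together with the care needed to preserve strictness of the inequality so that the boundary value $q=q^\star$ is correctly attributed to the onset of instability rather than to the stable region.
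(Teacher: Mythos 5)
Your proposal is correct and takes essentially the same approach as the paper, which states Proposition \ref{prop.stab.ring.q} as an immediate consequence of Proposition \ref{prop.complex.stab} applied to the circulant eigenvalues $\lambda_p=\alpha+\beta\omega^p+\gamma\overline{\omega}^p$, exactly as you do, including the appeal to \citet{ElSaka} for the Hopf bifurcation claim. Your additional observations --- the role of hypothesis (\ref{cond.as.stab.ring.q}) as the complement of Proposition \ref{prop.stab.ring.simple}, and the strictness of the inequality at $q=q^\star$ --- are consistent with the paper's treatment.
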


Taking into consideration the fact that the parameters $\alpha$, $\beta$ and
$\gamma$ given by (\ref{eq.alpha.ring}), (\ref{eq.beta.ring}) and (\ref%
{eq.gamma.ring}) belong to the complex plane, due to the high complexity of
the problem, it is a very difficult task to determine a simple expression
for the value of the minimum involved in the inequality (\ref%
{ineq.q.stab.ring}) or the formula of the critical value $q^\star$. In the particular case when $\alpha$, $\beta$ and
$\gamma$ take real values, we refer to \citet{Eva-Siva-4} for a more detailed analysis.

In the following, we exemplify a particular complex-valued case which will be
investigated in detail.

\begin{ex}
For simplicity, let us consider $\mathbf{z}^\ast=\mathbf{0}$. Moreover, we assume that the self-regulating parameters of the neurons are equal to $1$, no
neuronal self-connection is present, and $\beta$ and $\gamma$ belong to the
unit circle, i.e.:
\begin{equation*}
\alpha=-1,\quad \beta=\exp(i\theta_1),\quad \gamma=\exp(i\theta_2),\quad
\text{where }\theta_1,~\theta_2\in(-\pi,\pi].
\end{equation*}
Let $S_q(\mathbf{z}^\ast)\subset(-\pi,\pi]\times (-\pi,\pi]$ denote the set of parameters $(\theta_1,\theta_2)$ for which the equilibrium state $\mathbf{z}^\ast=\mathbf{0}$ is an asymptotically stable equilibrium state of system (\ref{frac.ring}). The set $S_q(\mathbf{z}^\ast)$ is called stability domain of the equilibrium state $\mathbf{z}^\ast$ of system (\ref{frac.ring}), with respect to the fractional order $q$.

It can be easily seen that the eigenvalues $\lambda_p$, $p=\overline{0,n-1}$, of the Jacobian matrix $J(\mathbf{z}^\ast)$ can be expressed as:
\begin{equation*}
\lambda_p=-1+2\cos\left(\frac{\theta_1-\theta_2}{2}+\frac{2p\pi}{n}%
\right)\exp\left(i\frac{\theta_1+\theta_2}{2}\right)
\end{equation*}
Based on Proposition \ref{prop.stab.ring.simple}, we deduce that if $%
\displaystyle\cos(\theta_1+\theta_2)<-\frac{1}{2}$ (or equivalently $%
\displaystyle \frac{2\pi}{3}<|\theta_1+\theta_2|<\frac{4\pi}{3}$) then $%
\Re(\lambda_p)<0$ and therefore, the steady state $\mathbf{z}^\ast$ is
asymptotically stable, for any $q\in(0,1]$. Therefore, we deduce:
\begin{equation*}
D_{\theta_1,\theta_2}=\{(\theta_1,\theta_2)\in(-\pi,\pi]\times(-\pi,\pi]:~%
\frac{2\pi}{3}<|\theta_1+\theta_2|<\frac{4\pi}{3}\}\subset S_1(\mathbf{z}%
^\ast)\subset S_q(\mathbf{z}^\ast),\quad\forall q\in(0,1].
\end{equation*}
Fig. \ref{fig:ring:n} shows that as the number of neurons $n$ from the ring
increases, the stability domain $S_1(\mathbf{z}^\ast)$ approaches the set $%
D_{\theta_1,\theta_2}$ (light tan colored region). In the dark blue region
we have $q^\star<0.2$, meaning that the steady state $\mathbf{z}^\ast$ is
asymptotically stable only for small values of the fractional order $q$. It
can be observed that as the combination of parameters $(\theta_1,\theta_2)$
approaches the line $\theta_1+\theta_2=0$, the critical value $q^\star$
decreases towards $0$. In fact, if $\theta_1+\theta_2=0$ and the number of
neurons $n$ is sufficiently large, the steady state $\mathbf{z}^\ast$ is
unstable for any $q\in(0,1]$. Indeed, as
\begin{equation*}
\lambda_p=-1+2\cos\left(\theta_1+\frac{2p\pi}{n}\right)\in\mathbb{R}%
,\quad\forall~p=\overline{1,n}
\end{equation*}
when $n$ is sufficiently large, it can be easily shown that there exists $%
p\in\{0,1,...,n-1\}$ such that $\lambda_p>0$.

\begin{figure}[tbp]
\centering
\begin{tabular}{ccc}
\includegraphics[width=0.47\textwidth]{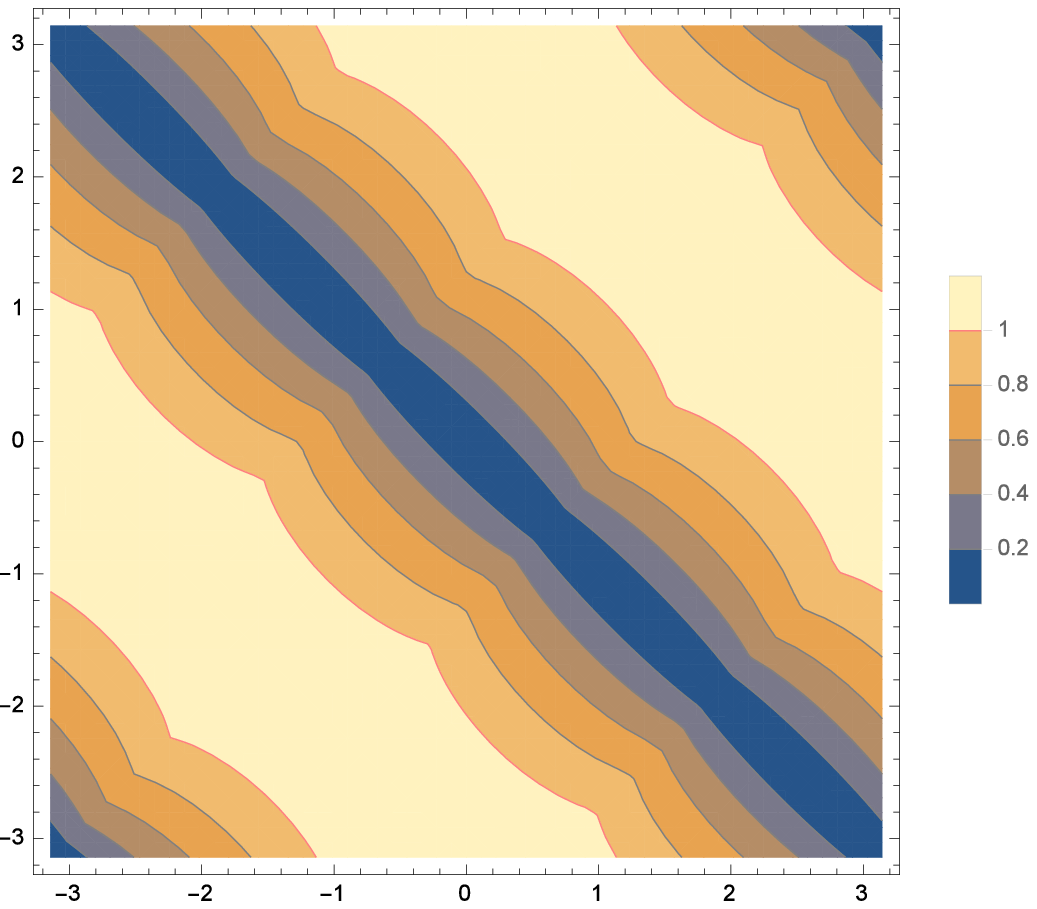} & %
\includegraphics[width=0.47\textwidth]{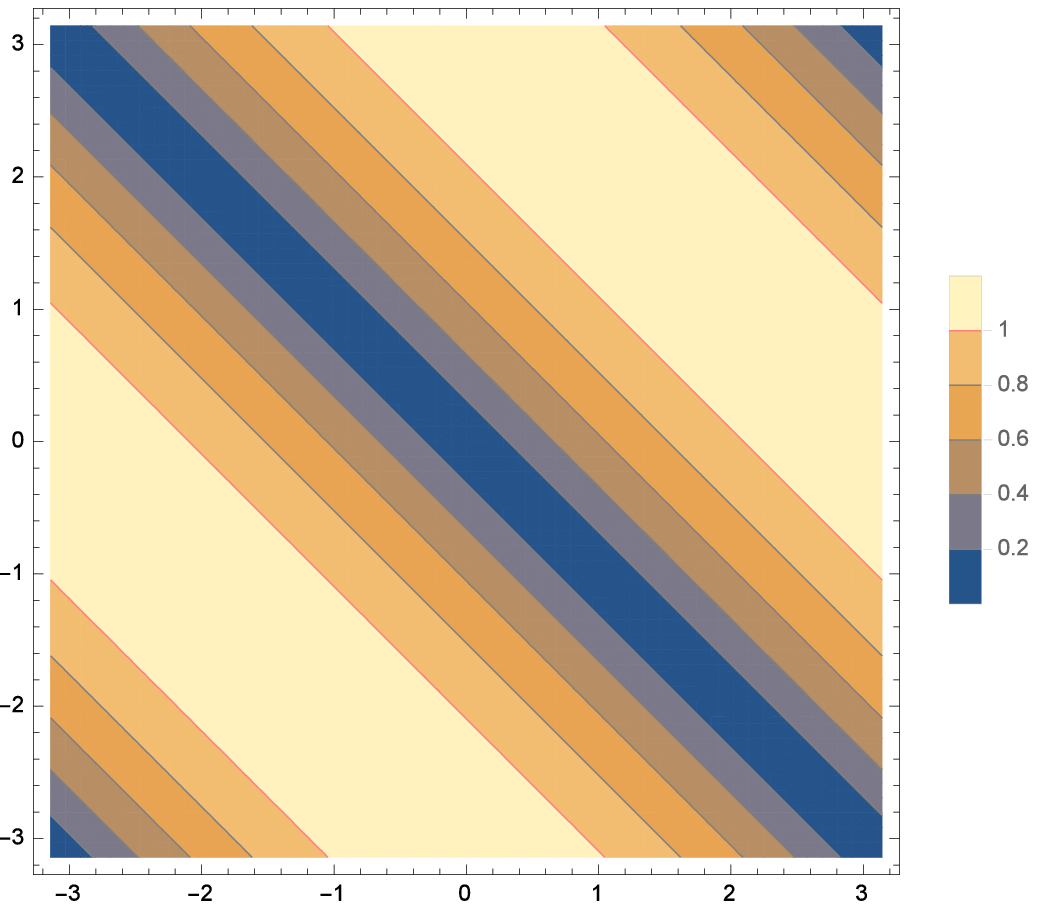} &  \\
$q^\star(\theta_1,\theta_2)$ when $n=5$ & $q^\star(\theta_1,\theta_2)$ when $%
n=100$ &
\end{tabular}%
\caption{Density plot of critical values $q^\star$ with respect to $(\protect%
\theta_1,\protect\theta_2)$, when $\protect\alpha=-1$, $\protect\beta=\exp(i%
\protect\theta_1)$, $\protect\gamma=\exp(i\protect\theta_2)$, for a small
number of neurons ($n=5$, left) and for a large number of neurons ($n=100$,
right), respectively. }
\label{fig:ring:n}
\end{figure}
\end{ex}

In the following, we exemplify a complex-valued neural network with an
infinity of steady states.

\begin{ex}
We consider the following complex-valued neural network of $n=3$ neurons
with ring structure:
\begin{equation}  \label{ex.ring}
\left\{%
\begin{array}{l}
^cD^qz_1(t)=-z_1(t)+T_0g(z_1(t))+T_1g(z_2(t))+T_2g(z_3(t)) \\
^cD^qz_2(t)=-z_2(t)+T_2g(z_{1}(t))+T_0g(z_2(t))+T_1g(z_{3}(t)) \\
^cD^qz_3(t)=-z_3(t)+T_1g(z_1(t))+T_2g(z_{2}(t))+T_0g(z_3(t))%
\end{array}
\right.
\end{equation}
In this neural network, $a=1$, $T_{11}=T_{22}=T_{33}=T_0=1-2i$, $%
T_{12}=T_{23}=T_{31}=T_1=1+i$ and $T_{13}=T_{21}=T_{32}=T_2=i$. The
activation function is $g(z)=\displaystyle\frac{z}{1+|z|}$.

The state $(z_1,z_2,z_3)^T$ is a steady state of system (\ref{ex.ring}) if
and only if it is a solution of the following nonlinear algebraic system
with complex coefficients:
\begin{equation}  \label{ex.ring.ss}
\left\{%
\begin{array}{l}
z_1=T_0g(z_1)+T_1g(z_2)+T_2g(z_3) \\
z_2=T_2g(z_1)+T_0g(z_2)+T_1g(z_3) \\
z_3=T_1g(z_1)+T_2g(z_2)+T_0g(z_3) \\
\end{array}
\right.
\end{equation}
From the first equation of (\ref{ex.ring.ss}) and $|g(z)|\leq 1$, it follows
that
\begin{equation*}
|z_1|\leq |T_0|+|T_1|+|T_2|=1+\sqrt{2}+\sqrt{5}=r.
\end{equation*}
We get similar results for $|z_2|$ and $|z_3|$, and hence, all the steady
states of system (\ref{ex.ring}) are inside the bounded set $B(0,r)\times
B(0,r)\times B(0,r)\subset\mathbb{C}^3$, where $B(0,r)$ denotes the ball
centered at the origin, of radius $r=1+\sqrt{2}+\sqrt{5}$ from the complex plane.

Particular steady states of system (\ref{ex.ring}) with equal components,
i.e. of the form $(z^\ast,z^\ast,z^\ast)^T$, can be found by solving the
equation
\begin{equation*}
-z+(T_0+T_1+T_2)g(z)=0
\end{equation*}
which is equivalent to $z=2g(z)$. It follows that $(z^\ast,z^\ast,z^\ast)^T$
is a steady state of system (\ref{ex.ring}) if and only if either $z^\ast=0$
(trivial equilibrium) or $|z^\ast|=1$ (i.e $z^\ast$ belongs to the unit
circle of the complex plane). We deduce that there are infinitely many
steady states for the system (\ref{ex.ring}).

In particular, we focus our attention on the stability of the trivial steady
state $\mathbf{z}^\ast=(0,0,0)^T$. As $g^{\prime }(0)=1$, based on (\ref%
{eq.alpha.ring}), (\ref{eq.beta.ring}) and (\ref{eq.gamma.ring}), we obtain $%
\alpha=-2i$, $\beta=T_1=1+i$ and $\gamma=T_2=i$. The first eigenvalue of the
Jacobian matrix is $\lambda_0=\alpha+\beta+\gamma=1$, and therefore, the
trivial solution is unstable for any fractional order $q\in(0,1)$. Indeed,
numerical computations (see Fig. \ref{fig:ring}) show that the trajectory of
the system with an initial condition from a small neighborhood of the
trivial steady state converges to one of the steady states of the form $%
(z^\ast,z^\ast,z^\ast)^T$ with $|z^\ast|=1$.

\begin{figure}[tbp]
\centering
\includegraphics[width=0.6\textwidth]{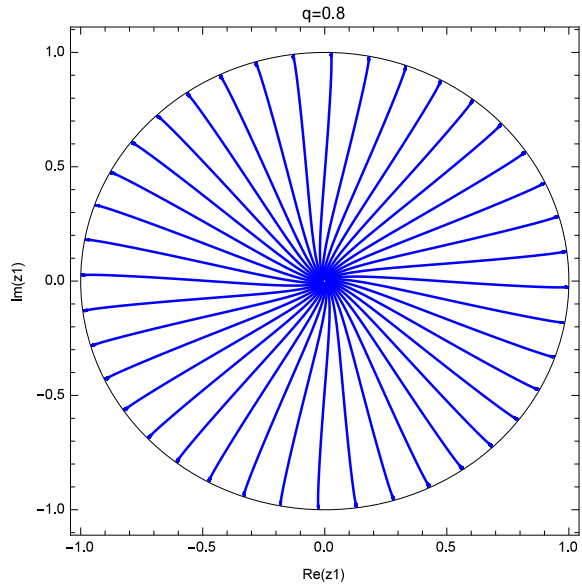}
\caption{Trajectories of system (\protect\ref{ex.ring}) with fractional
order $q=0.8$ and initial conditions from a small neighborhood of the null
solution.}
\label{fig:ring}
\end{figure}
\end{ex}

\section{Conclusions}

In the case of fractional-order complex-valued neural networks with ring or
hub structure, conditions for the stability and instability of a steady
state have been explored and various values of the fractional order $%
q\in(0,1)$ for which Hopf bifurcations may occur, have been identified.
Theoretical and numerical results presented in this paper show that
fractional-order complex-valued neural networks may exhibit rich dynamical
behavior in a neighborhood of a steady state, ranging from stability,
quasi-periodicity occurring due to a supercritical Hopf bifurcation, as well
as instability. Moreover, it has been pointed out that complex-valued neural
networks may possess an infinite number of steady states.

It is worth noting that chaotic behaviour has not been observed in our numerical simulations. This may be partly due to the activation function which has been used in the examples, as well as to the fact that in complex valued systems, a large number of steady states may be present (in our last example, we have an infinity of steady states). At least a part of these steady states may be asymptotically stable, such that the union of their domains of attraction include the whole phase-space. Assessment of chaotic behavior in fractional- or integer-order complex-valued neural networks with or without delays will be the topic of a future paper.

These simple neural network structures can be regarded as a test bed for
understanding the dynamics of more complicated structures. Applications of
such neural networks in pattern recognition and classification, intelligent
image processing, nonlinear filtering, brain-computer interfaces or time
series prediction may also constitute challenging directions for future research.

The extension of the results presented in this paper to more general fractional-order quaternion-valued neural networks \cite{Liu2016quaternion}, Clifford-valued neural networks \cite{Liu2016clifford} or complex-valued switched interval neural networks \cite{cao2013switched} is also an area worth exploring in future works.

\section*{Acknowledgement}

The authors are grateful to the editor and the referees for their helpful comments and suggestions which have improved the quality of the paper.

This work was supported by a grant of the Romanian National Authority for
Scientific Research and Innovation, CNCS-UEFISCDI, project no.
PN-II-RU-TE-2014-4-0270.

\appendix
\section{Proof of Lemma 1.}

We first assume that $A=\theta_1-\frac{\theta_2}{2}\notin\left\{-\pi,-\frac{\pi}{2},0,\frac{\pi}{2},\pi\right\}$.

Let us denote the roots of the polynomial $P(\lambda)$ by $\lambda_1 =r_1e^{i\phi_1}$ and $\lambda_2 =r_2e^{i\phi_2}$, with $r_1, r_2>0$ and $\phi_1,\phi_2\in(-\pi,\pi]$. We may assume, without loss of generality, that $\phi_1\geq\phi_2$. In this case, Vieta's formulas are:
$$
\left\{
  \begin{array}{ll}
    \lambda_1+\lambda_2=\alpha+\beta=\rho_1e^{i\theta_1} \\
    \lambda_1\cdot \lambda_2=(\alpha\beta-\gamma)=\rho_2e^{i\theta_2}
  \end{array}
\right.
$$
Taking the absolute value in the second formula, it follows that $r_1r_2=\rho_2$ and hence, the second Vieta's formula simplifies to
$$e^{i(\phi_1+\phi_2)}=e^{i\theta_2}$$ which leads to $\phi_1+\phi_2=\theta_2+2k\pi$, where $k\in\mathbb{Z}$. However, since $\phi_1+\phi_2\in(-2\pi,2\pi]$ and $\theta_2\in(-\pi,\pi]$, it follows that only three distinct cases have to be considered: $k=-1$ (if $\theta_2>0$), $k=0$ and $k=1$ (if $\theta_2<0$).

Denoting $\phi=\phi_1-\phi_2\in[0,2\pi)$, and dividing the first Vieta's formula by $(-1)^k e^{i\theta_2/2}$, we obtain:
\begin{equation}\label{dem.1}
r_1 e^{i\phi/2}+r_2e^{-i\phi/2}=(-1)^k\rho_1e^{iA}.
\end{equation}
Taking the real part, it follows that
$$(r_1+r_2)\cos(\phi/2)=(-1)^k\rho_1\cos(A)$$
and therefore, $\cos(\phi/2)$ and $(-1)^k\cos(A)$ must have the same sign, and hence, as $\phi\in[0,2\pi)$ and $\textrm{sign}(\sin(\phi))=\textrm{sign}(\cos(\phi/2))$, it follows that:
\begin{equation}\label{ineq0}
\textrm{sign}\left((-1)^k\sin(\phi)\right)=\textrm{sign}(\cos(A)).
\end{equation}
Multiplying eq. (\ref{dem.1}) by $e^{i\phi/2}$ and taking the imaginary part of the resulting equation, we obtain:
$$r_1\sin(\phi)=(-1)^k\rho_1\sin\left(A+\frac{\phi}{2}\right).$$
Similarly, multiplying eq. (\ref{dem.1}) by $e^{-i\phi/2}$ and taking the imaginary part of the resulting equation, we obtain:
$$-r_2\sin(\phi)=(-1)^k\rho_1\sin\left(A-\frac{\phi}{2}\right).$$
As $r_1,r_2,\rho_1>0$, it follows that the following condition must be satisfied:
\begin{equation}\label{ineq1}
\textrm{sign}\left((-1)^k\sin(\phi)\right)=\textrm{sign}\left(\sin\left(\frac{\phi}{2}\pm A\right)\right).
\end{equation}
Combining the last two equalities, we get:
$$-r_1r_2\sin^2(\phi)=\rho_1^2\sin\left(A+\frac{\phi}{2}\right)\sin\left(A-\frac{\phi}{2}\right),$$
or equivalently, since $r_1r_2=\rho_2$,
$$2\rho_2(\cos^2(\phi)-1)=\rho_1^2\left(\cos(\phi)-\cos(2A)\right).$$
Therefore, $\cos(\phi)$ is a root of the polynomial
$$p(x)=x^2-ux+u\cos(2A)-1,\qquad\textrm{where }u=\frac{\rho_1^2}{2\rho_2}.$$
The discriminant of the second degree polynomial $p(x)$ is
\begin{equation*}
\Delta =u^{2}-4u\cos (2A)+4\geq (u-2)^{2}\geq 0.
\end{equation*}%
We have
\begin{align*}
p(-1)&=u[1+\cos(2A)]>0\\
p(\cos(2A))&=\cos^2(2A)-1<0\\
p(1)&=u(\cos(2A)-1)<0
\end{align*}
and hence, the smallest root of the polynomial $q(x)$ belongs to the interval $(-1,\cos(2A))$, while the largest root is greater than $1$. Therefore, we obtain
$$
\cos(\phi)=\frac{u-\sqrt{u^{2}-4u\cos (2A)+4}}{2}\in (-1,\cos(2A)).
$$
and hence,
\begin{equation*}
\phi =\pm \arccos\left(\frac{u-\sqrt{u^{2}-4u\cos (2A)+4}}{2}\right) +2l\pi=\pm 2B+2l\pi,\quad l\in\mathbb{Z}.
\end{equation*}
As $\cos(\phi)<\cos(2A)$, we get $\cos(2B)<\cos(2A)$, and therefore,
$$\sin(A+B)\sin(B-A)>0,$$
which means that $\sin(B\pm A)$ have the same signs. Moreover, as $B\in\left(0,\frac{\pi}{2}\right)$, it follows that
\begin{equation}\label{ineq2}
\textrm{sign}(\sin(B\pm A))=\textrm{sign}\left(\sin B\cos A\pm \cos B\sin A\right)=\textrm{sign}(\cos A).
\end{equation}

Moreover, since $\phi\in[0,2\pi)$, it follows that we may have two cases:
\begin{description}
\item[Case 1.] $\phi=2B\in[0,\pi)$. In this case, it is easy to see that (\ref{ineq0}) and (\ref{ineq2}) imply (\ref{ineq1}).
Moreover:
\begin{itemize}
  \item If $k=-1$ (only if $\theta_2>0$), we obtain $\displaystyle\phi_{1,2}=\frac{\theta_2}{2}-\pi\pm B$. It is easy to see that $\phi_{1,2}\in(-\pi,\pi]$ if and only if $\theta_2>2B$. Based on (\ref{ineq0}), this subcase holds only if $\cos(A)<0$, leading to (b.3).
  \item If $k=0$, we obtain $\displaystyle\phi_{1,2}=\frac{\theta_2}{2}\pm B\in (-\pi,\pi]$. Based on (\ref{ineq0}), this subcase holds only if $\cos(A)>0$. Therefore, we obtain case (a).
  \item If $k=1$ (only if $\theta_2<0$), we obtain $\displaystyle\phi_{1,2}=\frac{\theta_2}{2}+\pi\pm B$. It is easy to see that $\phi_{1,2}\in(-\pi,\pi]$ if and only if $\theta_2\leq-2B$. Based on (\ref{ineq0}), this subcase holds only if $\cos(A)<0$, leading to (b.1).
\end{itemize}
\item[Case 2.] $\phi=2\pi-2B\in[\pi,2\pi)$. In this case, based on (\ref{ineq0}) and (\ref{ineq2}), we obtain:
\begin{align*}
\textrm{sign}\left(\sin\left(\frac{\phi}{2}\pm A\right)\right)&=\textrm{sign}\left(\sin\left(\pi-B\pm A\right)\right)\\
&=\textrm{sign}\left(\sin\left(B\pm A\right)\right)=\textrm{sign}\left(\cos (A)\right)\\
&=\textrm{sign}\left((-1)^k\sin(\phi)\right),
\end{align*}
and hence, condition (\ref{ineq1}) is satisfied. Moreover,
\begin{itemize}
\item If $k=-1$ (only if $\theta_2>0$), we obtain $\displaystyle\phi_{1,2}=\frac{\theta_2}{2}-\pi\pm (\pi-B)$. We observe that $\phi_2=\frac{\theta_2}{2}+B-2\pi<-\pi$, which is absurd.
\item If $k=0$, we obtain $\displaystyle\phi_{1,2}=\frac{\theta_2}{2}\pm (\pi-B)$. It is easy to see that $\phi_{1,2}\in(-\pi,\pi]$ if and only if $-2B<\theta_2\leq 2B$. Based on (\ref{ineq0}), this subcase holds only if $\cos(A)<0$, leading to (b.2).
\item If $k=1$ (only if $\theta_2<0$), we obtain $\displaystyle\phi_{1,2}=\frac{\theta_2}{2}+\pi\pm (\pi-B)$. We observe that $\phi_1=\frac{\theta_2}{2}-B+2\pi>\pi$, which is absurd.
\end{itemize}
\end{description}

If $A=\theta_1-\frac{\theta_2}{2}\in\left\{-\frac{\pi}{2},\frac{\pi}{2}\right\}$, i.e. $\cos(A)=0$, it follows from (\ref{dem.1}) that
$$(r_1+r_2)\cos(\phi/2)=0,$$
and hence, we obtain $\phi_1-\phi_2=\phi=\pi$. Taking into account that $\phi_1+\phi_2=\theta_2+2k\pi$, the only possible solution is $\displaystyle \phi_{1,2}=\frac{\theta_2\pm\pi}{2}$. This is a special case included in point (a) of the Lemma.

If $A=\theta_1-\frac{\theta_2}{2}\in\{-\pi,0,\pi\}$, i.e. $\cos(A)=(-1)^p$, $\sin(A)=0$, taking the real and imaginary parts of (\ref{dem.1}) we obtain
$$
\left\{
  \begin{array}{ll}
    (r_1+r_2)\cos(\phi/2)=(-1)^{k+p}\rho_1\\
    (r_1-r_2)\sin(\phi/2)=0
  \end{array}
\right.
$$
From here, either $r_1=r_2$ or $\phi=0$.

In the first case, $r_1=r_2$, taking into account that $r_1r_2=\rho_2$, we obtain $r_1=r_2=\sqrt{\rho_2}$ and
$$\cos(\phi/2)=(-1)^{k+p}\frac{\rho_1}{2\sqrt{\rho_2}}.$$
and hence,
$$\cos(\phi)=2\cos^2(\phi/2)-1=\frac{\rho_1^2}{2\rho_2}-1=u-1.$$
We observe that this can only hold if and only if $\rho_1^2\leq 4\rho_2$. Moreover, as $\phi\in[0,2\pi)$, we have $\phi=\arccos(u-1)=2B$ or $\phi=2\pi-\arccos(u-1)=2(\pi-B)$. From here, the proof follows cases 1 and 2 as above.

In the second case, $\phi=0$, the above system shows that $k+p$ is an even number.

If $A=0$, then $k=0$ and it simply follows $\phi_{1,2}=\frac{\theta_2}{2}$, which is a particular case of point (a) of the Lemma.

If $A\in\{-\pi,\pi\}$, then $k=\pm 1$ and hence $\phi_{1,2}=\frac{\theta_2}{2}+\pi$ if $\theta_2<0$ and $\phi_{1,2}=\frac{\theta_2}{2}-\pi$ if $\theta_2>0$, in agreement with points (b.1) and (b.3) of the Lemma.

%\bibliography{Evabib,fractional_applications,scopus_hub,scopus_ring,fractional_bib,CVNN_bib}

\end{document}